\documentclass[12pt]{article}
\usepackage{amsmath,amssymb,amsthm,url}
\usepackage{enumerate}
\usepackage{graphicx}
\usepackage[dvips]{color}
\textheight 8.7 true in \textwidth 6.3 true in \hoffset -1.5 true cm
\voffset -0.8 true cm

\newtheorem{theorem}{Theorem}[section]

\newtheorem{definition}[theorem]{Definition}
\newtheorem{lemma}[theorem]{Lemma}
\newtheorem{proposition}[theorem]{Proposition}

\newtheorem{conjecture}[theorem]{Conjecture}

\begin{document}

\title{Relating the total domination number and the annihilation number for quasi-trees and some composite graphs}

\author{
Hongbo Hua $^{a,}$\thanks{Correspondence should be addressed to:
 hongbo\_hua@163.com (H. Hua), xyhuamath@163.com (X. Hua),
 sandi.klavzar@fmf.uni-lj.si (S. Klav\v zar), kexxu1221@126.com (K. Xu)}
\and
Xinying Hua $^{b,c}$
\and
Sandi Klav\v zar $^{d,e,f}$
\and
Kexiang Xu $^{b,c}$
}
\maketitle

\begin{center}
	$^a$ Faculty of Mathematics and Physics, Huaiyin Institute of Technology,
 Huai'an, Jiangsu 223003, PR China \\
	\medskip

	$^b$ College of Mathematics, Nanjing University of Aeronautics \& Astronautics, Nanjing, Jiangsu 210016, PR China\\
	\medskip

   $^c$ MIIT Key Laboratory of Mathematical Modelling and High Performance
 Computing of Air Vehicles, Nanjing 210016, China

	$^d$ Faculty of Mathematics and Physics, University of Ljubljana, Slovenia\\
	\medskip
		
	$^e$ Institute of Mathematics, Physics and Mechanics, Ljubljana, Slovenia\\
	\medskip
	
	$^f$ Faculty of Natural Sciences and Mathematics, University of Maribor, Slovenia\\
	\medskip
\end{center}

\begin{abstract}
The total domination number $\gamma_{t}(G)$ of a graph $G$ is the cardinality of a smallest set $D\subseteq V(G)$ such that each vertex of $G$  has a neighbor in $D$. The annihilation number $a(G)$ of $G$ is the largest integer $k$ such that there exist $k$ different vertices in $G$ with the degree sum at most $m(G)$. It is conjectured that $\gamma_{t}(G)\leq a(G)+1$ holds for every nontrivial connected graph $G$. The conjecture has been proved for graphs with minimum degree at least $3$, trees, certain tree-like graphs, block graphs, and cactus graphs. In the main result of this paper it is proved that the conjecture holds for quasi-trees.  The conjecture is verified also for some graph   constructions including bijection graphs,  Mycielskians, and the newly introduced universally-identifying graphs.
\end{abstract}

\noindent
{\bf Keywords:} total domination number; annihilation number; quasi-trees; bijection graph; Mycielskian \\

\noindent
AMS Subj.\ Class.\ (2020): 05C69, 05C07

\baselineskip20pt

\section{Introduction}

Let $G = (V(G), E(G))$ be a simple graph. If $v\in V(G)$, then $N_{G}(v)$ is its neighborhood and $d_{G}(v) = |N_{G}(v)|$ its degree. $D\subseteq V(G)$ is a {\em total dominating set} if each vertex from $V(G)$ has a neighbor in $D$. The minimum cardinality of a total dominating set is the \emph{total domination number}, $\gamma_{t}(G)$, of $G$. For an in-depth information on the total domination see the excellent book~\cite{Henning}.

Let $d_{1}\leq d_{2}\leq \cdots\leq d_{n}$ be the (ordered) degree sequence of a graph $G$. The \emph{annihilation number}, $a(G)$, of $G$ is the largest integer $k$ such that  $\sum\limits_{i=1}^{k}d_{i}\leq m(G)$, where $m(G)$ is the size of $G$. (The order of $G$ will be denoted by $n(G)$.) In other words, the annihilation number is the largest integer $k$ such that
$\sum\limits_{i=1}^{k}d_{i}\leq\sum\limits_{i=k+1}^{n}d_{i}$. This concept  was introduced by Pepper in~\cite{pepper-2004}. In this paper we are interested in the following conjecture posed in a somewhat different form in Graffiti.pc~\cite{DeLaVina} and later reformulated by Desormeaux, Haynes, and Henning in~\cite[Question 1]{Desormeaux}. (To be historically correct we emphasize that in~\cite{Desormeaux} this conjecture was posed as a problem, but nowadays it is called a conjecture and we follow this naming.)

\begin{conjecture}[\cite{DeLaVina,Desormeaux}]
\label{con1}
If $G$ is a connected graph with $n(G)\ge 2$, then $\gamma_{t}(G)\leq a(G)+1$.
\end{conjecture}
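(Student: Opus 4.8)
The goal is the inequality $\gamma_{t}(G)\le a(G)+1$ for every connected $G$ with $n(G)\ge 2$, which I would first recast so that the degree sequence becomes explicit. Writing $d_{1}\le\cdots\le d_{n}$ for the ordered degrees, $a(G)$ is by definition the largest $k$ with $\sum_{i=1}^{k}d_{i}\le m(G)$, and since the partial sums are nondecreasing in $k$ we have $a(G)\ge\gamma_{t}(G)-1$ if and only if $\sum_{i=1}^{\gamma_{t}(G)-1}d_{i}\le m(G)$. Thus the whole problem reduces to exhibiting $\gamma_{t}(G)-1$ vertices whose degree sum is at most half of $\sum_{i}d_{i}$. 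My plan is to combine two crude but robust lower bounds on $a$ with sharp upper bounds on $\gamma_{t}$ stratified by the minimum degree: the bound $a(G)\ge\lfloor n/2\rfloor$ (immediate, since $\sum_{i=1}^{\lfloor n/2\rfloor}d_{i}\le\sum_{i=\lfloor n/2\rfloor+1}^{n}d_{i}$ by monotonicity of the sorted degrees), and the structural bound that $a(G)$ is at least the number $\ell$ of leaves of $G$ (for connected $G\neq K_{2}$, the leaves alone contribute degree sum $\ell\le n-1\le m(G)$).

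First I would dispose of the high-minimum-degree regime. If $\delta(G)\ge 3$, the classical bound $\gamma_{t}(G)\le n/2$ together with $a(G)\ge\lfloor n/2\rfloor$ yields $\gamma_{t}(G)\le\lfloor n/2\rfloor\le a(G)\le a(G)+1$; this recovers the already-settled $\delta\ge 3$ case. Hence the genuine content lies in the range $\delta(G)\in\{1,2\}$, where pendant vertices and induced paths of degree-$2$ vertices inflate $\gamma_{t}$ relative to the degree sequence. In exactly this regime the global estimates $\gamma_{t}\le 2n/3$ and $a\ge n/2$ are provably too weak: they force $2n/3\le\lfloor n/2\rfloor+1$, hence only settle $n\le 6$. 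A finer, local analysis is therefore unavoidable.

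The core of the plan is an inductive reduction that peels off pendant structure while tracking both invariants simultaneously. Let $L$ be the set of leaves and $S$ the set of support vertices; every support vertex is forced into every total dominating set, whereas leaves are never needed in a minimum one. I would select a deepest pendant path and either delete a leaf–support pair or contract a maximal degree-$2$ segment to obtain a smaller connected graph $G'$, apply the inductive hypothesis $\gamma_{t}(G')\le a(G')+1$, and argue that each operation decreases $\gamma_{t}$ by at least as much as it decreases $a(G)+1$. The lemma to isolate is a \emph{charge-preservation} step: a reduction removing $t$ vertices should lower $\gamma_{t}$ by no more than the drop it forces in $a$, which one could package as a discharging scheme assigning to each vertex of a minimum total dominating set a unit of degree-sequence budget drawn from nearby low-degree vertices.

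The main obstacle, and the reason the conjecture is still open in full generality, is that $a(G)$ is a \emph{global} function of the entire sorted degree sequence and is neither monotone nor locally controllable: deleting or contracting even one vertex can reorder the sequence and shift the cutoff index $k$ in the definition of $a$ by an unpredictable amount, so the change in $a$ under surgery does not align cleanly with the (also nonlocal) change in $\gamma_{t}$. This is precisely why every settled family, namely trees, block graphs, cactus graphs, quasi-trees, and $\delta\ge 3$ graphs, is one in which the degree sequence, and hence the annihilation cutoff, can be pinned down globally before any reduction is carried out. I therefore expect the hardest part to be engineering a reduction whose effect on the \emph{position} of the annihilation cutoff is controlled, rather than merely its effect on one or two individual degrees; honestly, closing the general case appears to require a genuinely new device for governing $a(G)$ under local graph modifications, and in the absence of such a device one can only verify the bound on structured classes, exactly as the present paper does for quasi-trees and the composite graphs it treats.
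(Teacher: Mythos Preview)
The statement you were asked to prove is Conjecture~1.1, which is an \emph{open conjecture}: the paper does not prove it in general, and neither does anyone else as of the paper's writing. There is therefore no ``paper's own proof'' to compare against. What the paper does is verify the conjecture for quasi-trees (Theorem~1.2) and for several composite constructions (Section~3), each time by exploiting enough structural control on the degree sequence that the annihilation cutoff can be tracked explicitly under carefully designed reductions.

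Your write-up is not a proof and, to your credit, does not pretend to be one: you correctly reproduce the $\delta(G)\ge 3$ argument (which is exactly the one the paper cites), correctly observe that the crude bounds $\gamma_t\le 2n/3$ and $a\ge\lfloor n/2\rfloor$ are too weak to close the $\delta\in\{1,2\}$ regime, and then sketch an inductive peeling strategy while explicitly identifying why it stalls --- namely that $a(G)$ is a global function of the sorted degree sequence whose change under local surgery is not controllable. That diagnosis is accurate and matches the reason every settled case in the literature (trees, cacti, block graphs, quasi-trees) is one where the degree sequence is rigid enough to pin down $a$ before reducing. Your concluding sentence, that absent a new device for governing $a(G)$ under local modifications one can only verify the bound on structured classes, is precisely the situation the paper reflects.

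In short: there is no gap to name because you have not claimed a proof; your proposal is a sound analysis of why the conjecture is hard, and it aligns with the paper's implicit stance. If the intent was to supply an actual argument, however, none is present beyond the already-known $\delta\ge 3$ case.
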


It immediately follows from the definition of the annihilation number that $a(G) \geq \lfloor \frac{n(G)}{2}\rfloor$. Since it was proved in~\cite{archdeacon-2004} that if the minimum degree $\delta(G)$ of $G$ is at least $3$, then $\gamma_{t}(G) \le \lfloor \frac{n(G)}{2}\rfloor$ (see also~\cite{Thomasse} for this result and~\cite{henning-2007} for its generalization),  Conjecture~\ref{con1} holds for graphs $G$ with  minimum degree $\delta(G)\ge 3$. In the seminal paper~\cite{Desormeaux}, the conjecture was verified for trees, while recently Bujt\'{a}s and Jakovac verified it for cactus graphs and for block graphs~\cite{Bujtas}. In~\cite{Yue}, the conjecture was further verified for the so-called $C$-disjoint graphs and for generalized theta graphs.
A graph $G$ is a \emph{quasi-tree} if there exists a vertex $x\in V(G)$ such that $G-x$ is a tree. Clearly, any tree is also a quasi-tree since it remains to be a tree after any leaf in it is removed. We say that a quasi-tree $G$ is \textit{non-trivial} if $G$ is not a tree. Since Conjecture~\ref{con1} holds for all trees, we only consider non-trivial quasi-trees throughout this paper. The main result of this paper reads as follows.

\begin{theorem}\label{t3}
If $G$ is a non-trivial quasi-tree, then $\gamma_{t}(G)\leq a(G)+1$.
\end{theorem}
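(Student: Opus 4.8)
The plan is to fix a non-trivial quasi-tree $G$ together with a vertex $x$ such that $T = G - x$ is a tree, and to compare $G$ with $T$ on both sides of the inequality. First I would dispose of trivial cases: if $d_G(x)=1$ then $G$ is in fact a tree, contrary to assumption, so $d_G(x)\ge 2$; and if $x$ is adjacent to every vertex of $T$ then $G$ is a very restricted graph (a "book"-like or wheel-like graph) which can be handled by a direct count, so assume $2\le d_G(x)\le n(G)-2$. The key structural observation is that $m(G) = m(T) + d_G(x) = (n(T)-1) + d_G(x) = n(G) - 2 + d_G(x)$, so $G$ has exactly $d_G(x)-1$ more edges than a tree on the same vertex set would.

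The main quantitative step is to control how the annihilation number changes when passing from the tree $T$ to $G$. Writing $d_1\le d_2\le\cdots\le d_n$ for the degree sequence of $G$, adding the edges at $x$ to $T$ raises the degree sum by $2(d_G(x)-1)$ relative to a spanning tree, and it raises $m(G)$ by $d_G(x)-1$; roughly speaking each extra unit of "budget" $m(G)$ lets us annihilate a bit more, and I expect to prove a bound of the form $a(G) \ge a(T') + \big\lfloor (d_G(x)-1)/?\big\rfloor$ for a suitable auxiliary tree $T'$, or more directly a self-contained lower bound $a(G) \ge \gamma_t(G) - 1$ obtained by exhibiting explicitly $\gamma_t(G)-1$ low-degree vertices whose degree sum is at most $m(G)$. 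The natural candidates are the leaves of $T$ together with low-degree internal vertices; since $T$ is a tree with at least two leaves and $G$ differs from $T$ only in the neighborhood of $x$, most leaves of $T$ remain leaves (degree $1$) in $G$ unless they are adjacent to $x$, in which case they have degree $2$.

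For the total domination side I would use the bound $\gamma_t(G)\le \gamma_t(T) + 2$ (since adding $x$ and one of its neighbors as a dominating pair, on top of a total dominating set of $T$ augmented to also dominate $x$, always works; in fact a sharper analysis gives $\gamma_t(G)\le \gamma_t(T)+1$ because $x$ is dominated once it has a neighbor in $D$ and $x$ itself can dominate its neighbors). Combining this with the Desormeaux--Haynes--Henning theorem that $\gamma_t(T)\le a(T)+1$ for the tree $T=G-x$, it remains to show $a(T) + c \le a(G)$ where $c$ is the total-domination overhead (so $c\in\{1,2\}$), i.e.\ that adding back the vertex $x$ and its $d_G(x)\ge 2$ incident edges does not decrease the annihilation number by more than it increases the left side. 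This reduces the whole theorem to a purely arithmetic lemma about degree sequences: if $T$ is a tree, $x$ a new vertex, and $G$ the graph obtained by joining $x$ to a set $S\subseteq V(T)$ with $|S|\ge 2$, then $a(G)\ge a(T)+1$ (and $\ge a(T)+2$ unless $|S|$ is small, where a direct argument covers the exceptions). I expect the main obstacle to be exactly this arithmetic lemma in the boundary case $d_G(x)=2$, where $G$ is a tree plus a single extra edge (a unicyclic graph), $m(G)=n(G)-1$, and one must show $a(G)\ge a(T)+1$; here I would argue by examining which two vertices of $T$ gain degree and showing that the sorted degree sequence shifts in a way that still allows one more vertex into the annihilation set, possibly splitting into subcases according to whether $x$'s two neighbors are leaves of $T$. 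Once the unicyclic base case is settled, larger $d_G(x)$ only gives more slack, and the general case follows by the same counting with room to spare.
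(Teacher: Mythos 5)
Your reduction collapses precisely in the tight case, and the ``purely arithmetic lemma'' you reduce everything to is false. Concretely: take $T=P_6$ and join a new vertex $x$ to its two leaves, obtaining $G=C_7$. Then $a(P_6)=3$ and $a(C_7)=3$, so $a(G)=a(T)$, not $a(T)+1$; your claimed lemma ``joining $x$ to $|S|\ge 2$ vertices of a tree gives $a(G)\ge a(T)+1$'' fails already here, and no case analysis on whether the neighbors of $x$ are leaves can rescue a false statement. Your fallback chain is equally stuck on this example: $\gamma_t(G)\le\gamma_t(T)+1$ gives $\gamma_t(C_7)\le 5$, while $a(C_7)+1=4$. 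The theorem is still true for $C_7$ ($\gamma_t(C_7)=4=a(C_7)+1$), but only with equality, so any argument that loses even one unit of slack in passing from $T$ to $G$ cannot close. The slack you need exists exactly when $T=G-x\notin\Gamma\cup\{P_2\}$, because then $\gamma_t(T)\le a(T)$ strictly (Desormeaux--Haynes--Henning), and in that regime your argument (with $a(G)\ge a(T)$, which does hold) goes through --- this is the paper's ``type-2'' case. What you are missing is any mechanism for the ``type-1'' case $G-x\in\Gamma$, where $\gamma_t(T)=a(T)+1$ and both of your inequalities can simultaneously be tight in the wrong direction.

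The paper's treatment of that hard case is structural, not arithmetic: it exploits the recursive description of $\Gamma$ (each $T\in\Gamma$ arises from a smaller member by appending a path of three or four vertices), classifies the quasi-trees with $G-x\in\Gamma$ into six families according to how $x$ attaches relative to the last-added path, and runs an induction that deletes that path from $G$, proving in each family both $\gamma_t(G)\le\gamma_t(G')+2$ and $a(G)\ge a(G')+2$ for the smaller quasi-tree $G'$. Nothing in your proposal substitutes for this descent; if you want to salvage your outline, you must isolate the case $G-x\in\Gamma$ for every admissible quasi-vertex $x$ and supply a genuinely different argument there.
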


We pose an open problem to characterize the quasi-trees $G$ satisfying the equality $\gamma_{t}(G)=a(G)+1$. We add that a conjecture parallel to~\ref{con1} has been posed also for the $2$-domination number of a graph $G$. In~\cite{desor-2014} (see also~\cite{lyle-2017}), the latter conjecture was verified for trees, and in~\cite{jakovac-2019} for block graphs. In~\cite{Yue-2020-2}, the conjecture was disproved by demonstrating that the $2$-domination number can be arbitrarily larger than the annihilation number. However, the counterexamples presented are far from being counterexamples for Conjecture~\ref{con1} and the authors say that they are ``inclined to believe that Conjecture~\ref{con1} holds true." The annihilation number was compared with the Roman domination number in~\cite{aram-2018} and with the locating-total domination number in~\cite{ning-2019}.

We proceed as follows. In the rest of this section we list some further definitions needed. In Section~\ref{sec:proof}, a proof of Theorem~\ref{t3} is given, while in the final section we confirm the validity of Conjecture~\ref{con1} for several graph operations which also generate graphs that have vertices of degree at most $2$.

Let $G$ be a graph. Then $S\subseteq V(G)$ is an \emph{annihilation set} if  $\sum\limits_{v\in S} d_{G}(v)\leq m(G)$. $S$ is an \emph{optimal annihilation set} if $|S|=a(G)$ and $\max\{d_{G}(v)|v\in S\} \le \min\{d_{G}(u)|u\in V(G)\setminus S\}$. A total dominating set of cardinality $\gamma_{t}(G)$ is called a {\em $\gamma_t$-set} of $G$. By $G[S]$ we denote the subgraph of $G$ induced by all vertices in $S\subseteq V(G)$. For a subset $S \subseteq V(G)$, we define $\sum(S,G)=\sum_{v\in S}d_G(v)$. The path and the cycle of order $n$ are respectively denoted by $P_{n}$ and $C_{n}$.  A \emph{generalized theta graph} $\Theta_{s_{1},\,\ldots,\,s_{k}}$ is formed by taking a pair of vertices $u, v$ and joining them by $k$ internally disjoint paths of lengths $s_{1},\,\ldots,\,s_{k}$, where $k\geq 3$. In particular, $\Theta_{s_{1},\,s_{2},\,s_{3}}$ is said to be a \emph{theta graph}. Let $C$ be the set containing all cycles, cliques and generalized theta graphs. We say a connected graph $F$ is \emph{$C$-disjoint} if any two subgraphs from $C$ in $F$ have no edge in common. $C$-disjoint graphs form a natural generalization of trees, cactus graphs, and block graphs. From our perspective, the most important thing is that Yue, Zhu, and Wei~\cite{Yue} proved Conjecture~\ref{con1} for all $C$-disjoint graphs.

For further graph terminology and notation not defined here see~\cite{west-2001}. Finally, for a positive integer $n$ we use the convention $[n] = \{1,\ldots, n\}$.

\section{Proof of Theorem~\ref{t3}}
\label{sec:proof}

We start by recalling the definition of a class of labeled trees $\Gamma$ due to Chen and Sohn~\cite{Chen}. The label of a vertex $v$ will be called its {\em status} and denoted by $sta(v)$. The labels in the definition are needed in order to be able to describe the family. But since in principle we are only interested in unlabeled graphs, we may later consider $\Gamma$ (as well as its subclasses) also as a class of unlabeled trees.

\begin{definition}
\label{def1}
Let $\Gamma$ be the family of labeled trees $T = T_k$ that can be obtained as follows. Let $T_0$ be a $P_6$ in which the two leaves have status $C$, the two support vertices have status $A$ and the remaining two vertices
have status $B$. If $k\geq 1$, then $T_k$ can be obtained recursively from
$T_{k-1}$ by one of the following operations, cf.~Fig.~\ref{fig:gamma}.
\begin{enumerate}[$\bullet$]
\item Operation $o_{1}$. For any $y\in V(T_{k-1})$,
if $sta(y)= C$ and $y$ is a leaf of $T_{k-1}$,
then add a path $xwvz$ and edge $xy$. Set $sta(x) =
sta(w) = B$, $sta(v) = A$, and $sta(z) = C$.
\item Operation $o_{2}$. For any $y\in V(T_{k-1})$,
if $sta(y)= B$, then add a path $xwv$ and edge $xy$.
Set $sta(x)=B$, $sta(w) = A$, and $sta(v) = C$.
\end{enumerate}
\end{definition}

\begin{figure}[ht!]
\begin{center}
\includegraphics*[width=15cm]{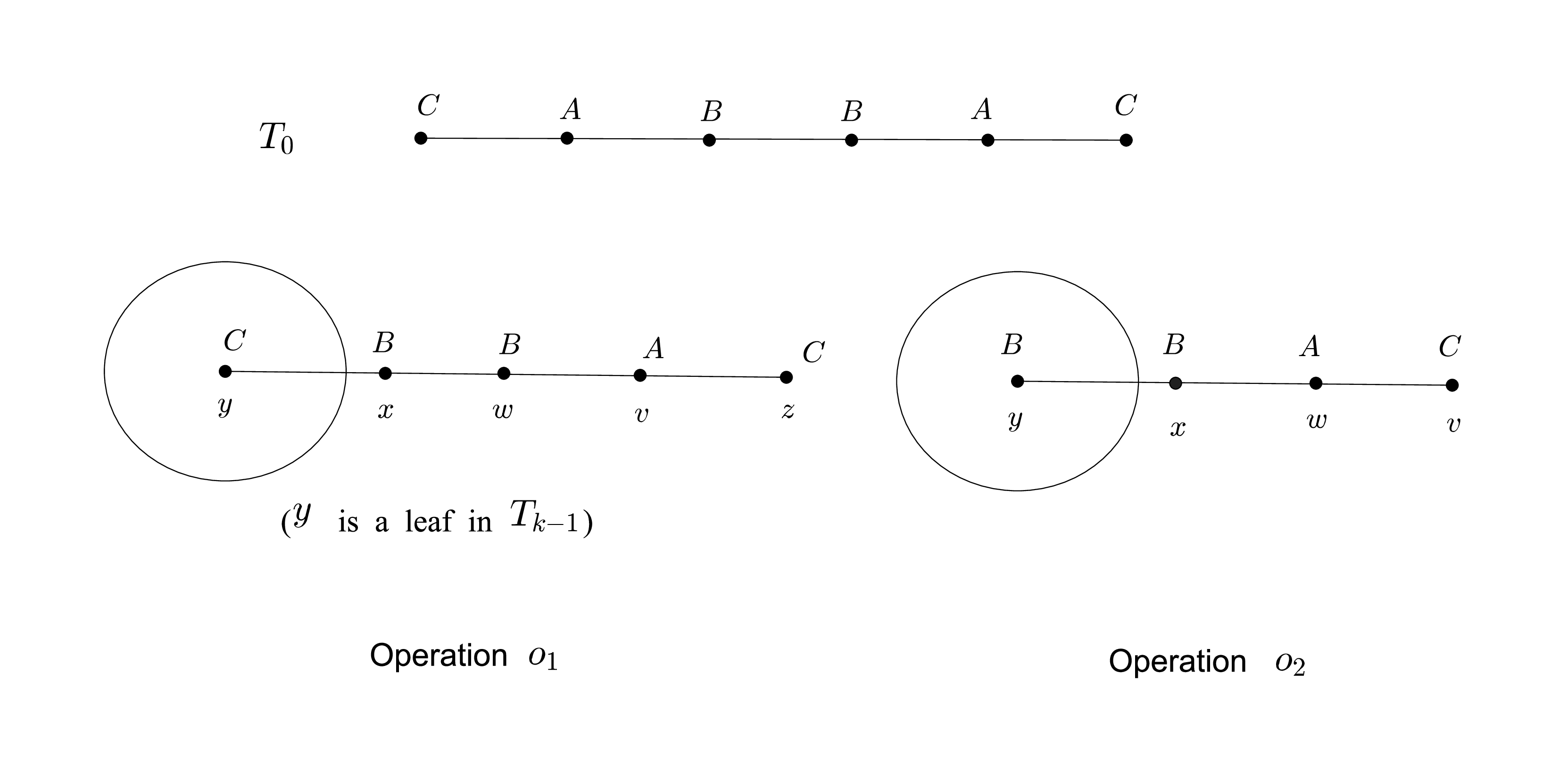}
\end{center}
\vspace*{-1.5cm}
\caption{The labelled tree $T_0$, and the two operations that define $\Gamma$}
\label{fig:gamma}
\end{figure}

The class of labeled trees $\Gamma$ is thus the smallest class of labeled trees that contains $T_0$ and can be built from it by successive applications of operations $o_{1}$ and $o_{2}$. Desormeaux, Haynes, and Henning~\cite{Desormeaux} proved that a tree $T$ of order at least $3$ satisfies $\gamma_{t}(T) = a(T)+1$ if and only if $T$ belongs to $\Gamma$ (considered as unlabeled trees). Since $P_2$ also satisfies this equality, we thus have:

\begin{theorem}[\cite{Desormeaux}]\label{t1}
If $T$ is a tree, then $\gamma_{t}(T)\leq a(T)+1$. Moreover, the equality holds if and only if $T\in \Gamma \cup \{P_2\}$.
\end{theorem}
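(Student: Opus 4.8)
The plan is to prove both halves of Theorem~\ref{t1} by induction, handling the inequality together with the ``only if'' part of the equality characterization in one induction, and the ``if'' part in a separate one. It is convenient first to rephrase the bound. Since $m(T) = n(T) - 1$ and $\sum(V(T),T) = 2(n(T)-1)$ for a tree, the inequality $\gamma_t(T) \le a(T) + 1$ is equivalent to $a(T) \ge \gamma_t(T) - 1$, which by the definition of the annihilation number is nothing but the assertion that the $\gamma_t(T) - 1$ vertices of smallest degree have degree sum at most $m(T)$. So it suffices to exhibit, in every tree, an annihilation set of size $\gamma_t(T) - 1$, and for the sharper statement to understand exactly when there is none of size $\gamma_t(T)$.

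I would first dispatch the ``if'' direction, that every $T \in \Gamma \cup \{P_2\}$ attains equality, by induction along the operations $o_1$ and $o_2$ generating $\Gamma$. For $P_2$ one checks $\gamma_t = 2$ and $a = 1$ directly, and for the base tree $T_0 = P_6$ one has $\gamma_t(T_0) = 4$ and $a(T_0) = 3$. The inductive claim is that each operation raises both $\gamma_t$ and $a$ by exactly $2$, so that equality is preserved. Operation $o_1$ appends the path $xwvz$ on four new vertices and $o_2$ the path $xwv$ on three new vertices; in each case a $\gamma_t$-set of $T_{k-1}$ extends to a total dominating set of $T_k$ by adjoining the two new internal vertices of the appended path, giving $\gamma_t(T_k) \le \gamma_t(T_{k-1}) + 2$, while a suitably chosen annihilation set of $T_{k-1}$ extends by adjoining the new pendant vertex together with one new vertex of degree $2$, giving $a(T_k) \ge a(T_{k-1}) + 2$. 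The reverse inequalities come from applying the induction hypothesis to $T_{k-1}$, viewed as $T_k$ with the appended branch removed. Here the status labels $A,B,C$ of Definition~\ref{def1} are exactly the bookkeeping that guarantees these extensions remain valid no matter which eligible vertex $y$ the operation is applied to; in particular, for $o_2$ one must select the annihilation set of $T_{k-1}$ so as to avoid $y$, which the abundance of pendant and degree-$2$ vertices in $\Gamma$ permits.

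For the main direction I would use strong induction on $n(T)$, pruning a branch at the end of a longest path. After disposing of small-diameter trees (stars and short paths) as base cases, I fix a longest path $v_0 v_1 \cdots v_\ell$, root $T$ at $v_\ell$, and analyse the branch hanging at $v_2$. Because the path is longest, every child of $v_1$ other than $v_0$ is a leaf and every child of $v_2$ carries only a short subtree. The cases split according to $d_T(v_1)$ (whether $v_1$ supports one leaf or several) and according to the configuration of the remaining children of $v_2$. In each case I delete a small pendant subtree $B$ — a single extra leaf, or a pendant $P_2$, $P_3$, or $P_4$ — to form $T' = T - B$, apply the induction hypothesis to $T'$, and then verify $\gamma_t(T) - \gamma_t(T') \le a(T) - a(T')$. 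Matching the removable branches $B$ to the shapes produced by $o_1$ and $o_2$ is what makes the equality statement fall out: if $\gamma_t(T) = a(T) + 1$, then the inequality above must be tight in the pruning step, which forces $T' \in \Gamma \cup \{P_2\}$ and forces $B$ to be attached with precisely the status pattern prescribed by $o_1$ or $o_2$, so that $T \in \Gamma$.

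The hard part is controlling $a$ under the pruning, since $a(T)$ is read off the globally sorted degree sequence and a purely local deletion can shift the boundary between an optimal annihilation set and its complement in a non-local way. Removing a pendant subtree $B$ attached by a single edge deletes $|B|$ vertices and $|B|$ edges and lowers the degree of the attachment vertex by one, so both $n$ and $m$ drop by $|B|$, yet the set of vertices lying below the annihilation threshold may change elsewhere in the tree. To manage this I would work throughout with an optimal annihilation set, i.e.\ the smallest-degree vertices, and track the slack $m(T) - \sum(S,T)$ for such a set $S$, showing that pruning $B$ lowers $a$ by at least as much as it lowers $\gamma_t$ while the slack stays nonnegative. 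The tightness of these slack estimates is exactly what pins down the equality cases. Establishing the $a$-estimates uniformly across the handful of branch types, and checking that no tree outside $\Gamma$ can make every estimate simultaneously tight, is the technical core of the argument.
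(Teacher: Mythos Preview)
The paper does not contain a proof of Theorem~\ref{t1}; it is quoted verbatim from Desormeaux, Haynes, and Henning~\cite{Desormeaux} and used as a black box throughout Section~\ref{sec:proof}. There is therefore nothing in this paper to compare your proposal against.

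That said, your sketch is broadly in line with how~\cite{Desormeaux} actually proceeds: induction on the tree, rooting at the end of a longest path, case analysis on the structure near $v_1$ and $v_2$, and tracking how $\gamma_t$ and $a$ change when a small pendant subtree is removed. Two places in your outline would need real work to become a proof. First, in the ``if'' direction you assert that the reverse inequalities $\gamma_t(T_k) \ge \gamma_t(T_{k-1})+2$ and $a(T_k) \le a(T_{k-1})+2$ follow ``from applying the induction hypothesis to $T_{k-1}$''; the induction hypothesis only gives you equality for $T_{k-1}$, not these monotonicity bounds, so you still have to argue directly that no $\gamma_t$-set of $T_k$ can be smaller than $\gamma_t(T_{k-1})+2$ and that no annihilation set of $T_k$ can exceed $a(T_{k-1})+2$. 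Second, your claim that for $o_2$ one can always choose an optimal annihilation set of $T_{k-1}$ avoiding the attachment vertex $y$ is asserted but not justified; this is exactly where the status labelling has to be exploited, and the argument is not automatic. These are genuine gaps in the write-up, though the overall strategy is sound.
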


If $G$ is a quasi-tree and $x\in V(G)$ such that $G-x$ is a tree, then we say that $x$ is a \emph{quasi-vertex}. We now partition quasi-trees into two classes as follows.

\begin{definition}\label{def2}
A quasi-tree $G$ is {\em type-1} if it contains a quasi-vertex $x$, such that $G-x\in \Gamma$. Otherwise, $G$ is {\em type-2}.
\end{definition}

Let $\Gamma_1\subset \Gamma$ be the class of trees from $\Gamma$ for which in their construction, at the last step operation $o_1$ was performed. Similarly, $\Gamma_2\subset \Gamma$ is be the class of trees from $\Gamma$ for which in their construction, at the last step operation $o_2$ was performed.  Moreover, if $T\in \Gamma_1$, then the vertices $x, w, v, z$ will be the vertices added in the last step (cf.~Fig.~\ref{fig:gamma}), and if $T\in \Gamma_2$, then the vertices $x, w, v$ will be the vertices added in the last step (cf.~Fig.~\ref{fig:gamma}). With this agreement we define the following six subclasses of type-1 quasi-trees; see Fig.~\ref{fig:1-2} for their schematic presentation.

\begin{enumerate}[$\bullet$]
\item $QT_{1}$ contains quasi-trees $G$  obtained from a tree $T\in \Gamma_1$ and an isolated vertex $h$ by adding $t\geq 2$ edges between $h$ and $V(T)\setminus \{x, w, v, z\}$.
\item $QT_{2}$ contains quasi-trees $G$  obtained from a tree $T\in \Gamma_2$ and an isolated vertex $h$ by adding $t\geq 2$ edges between $h$ and $V(T)\setminus \{x, w, v\}$.
\item $QT_{3}$ contains quasi-trees $G$ obtained from a tree $T\in \Gamma_1$ and an isolated vertex $h$ by adding $t\geq 2$ edges between $h$ and $\{x, w, v, z\}$.
\item $QT_{4}$ contains quasi-trees $G$ obtained from a tree $T\in \Gamma_2$ and an isolated vertex $h$ by adding $t\geq 2$ edges between $h$ and $\{x, w, v\}$.
\item $QT_{5}$ contains quasi-trees $G$  obtained from a tree $T\in \Gamma_1$ and an isolated vertex $h$ by adding $t_{1}\geq 1$ edges between $h$ and $\{x, w, v, z\}$, and $t_{2}\geq 1$ edges between $h$ and $V(T)\setminus \{x, w, v, z\}$.
\item $QT_{6}$ contains quasi-trees $G$  obtained from a tree $T\in \Gamma_2$ and an isolated vertex $h$ by adding $t_{1}\geq 1$ edges between $h$ and $\{x, w, v\}$, and $t_{2}\geq 1$ edges between $h$ and $V(T)\setminus \{x, w, v\}$.
\end{enumerate}

\begin{figure}[ht!]
\begin{center}
\includegraphics*[width=13cm]{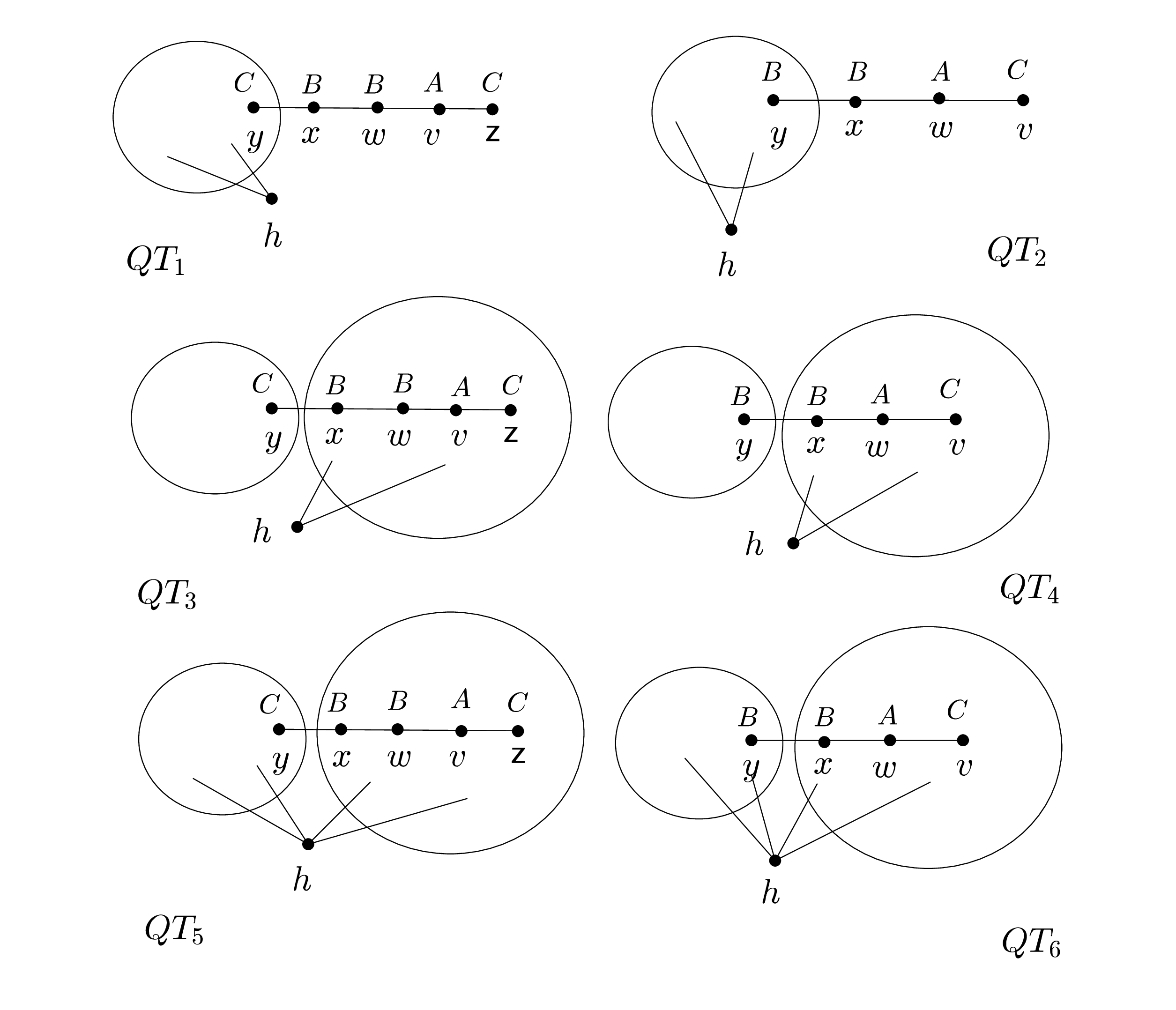}
\end{center}
\begin{center}
\end{center}
\vspace*{-1.5cm}
\caption{Type-1 quasi-trees from classes $QT_{1}, \ldots, QT_{6}$}
\label{fig:1-2}
\end{figure}

In the following series of six lemmas we respectively consider type-1 quasi-trees from classes $QT_{1}, \ldots, QT_{6}$.

\begin{lemma}\label{lem1}
Let $G\in QT_{1}$ and let
$G^{'}=G[V(G)\setminus \{x,\,w,\,v,\,z\}]$.
If  $\gamma_{t}(G^{'})\leq a(G^{'})+1$, then
 $\gamma_{t}(G)\leq a(G)+1$.
\end{lemma}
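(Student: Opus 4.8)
I want to bound $\gamma_t(G)$ from above by $\gamma_t(G') + (\text{small constant})$, and simultaneously bound $a(G)$ from below by $a(G') + (\text{same constant}) - 1$, so that the hypothesis $\gamma_t(G') \le a(G')+1$ propagates to $G$. Since $G \in QT_1$ is obtained from $T \in \Gamma_1$ by attaching the path $xwvz$ (with edge $xy$) in the last application of operation $o_1$, and then joining the quasi-vertex $h$ by $t \ge 2$ edges to vertices of $V(T) \setminus \{x,w,v,z\}$, the graph $G' = G[V(G) \setminus \{x,w,v,z\}]$ is exactly the quasi-tree obtained from $T_{k-1}$ (the predecessor of $T$ in $\Gamma$) together with $h$ and the same $t$ edges; in particular $V(G') = V(G) \setminus \{x,w,v,z\}$, and the edge set of $G$ is that of $G'$ plus the four edges $yx, xw, wv, vz$. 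Note also that $h$ is not adjacent to any of $x,w,v,z$, so the degrees in $G$ of all vertices of $G'$ except possibly $y$ coincide with their degrees in $G'$, while $d_G(y) = d_{G'}(y) + 1$.

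\medskip\noindent
\textbf{Step 1: the total domination side.} First I would show $\gamma_t(G) \le \gamma_t(G') + 2$. Take a $\gamma_t$-set $D'$ of $G'$. The vertex $y$ has a neighbor in $D'$ (since $y \in V(G')$), so $y$ is already dominated; adding $\{w,v\}$ to $D'$ dominates $x$ (via $w$), $w$ (via $v$), $v$ (via $w$), and $z$ (via $v$), so $D' \cup \{w,v\}$ is a total dominating set of $G$ of size $\gamma_t(G')+2$. Hence $\gamma_t(G) \le \gamma_t(G') + 2$.

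\medskip\noindent
\textbf{Step 2: the annihilation side.} Here is the crux. I want $a(G) \ge a(G') + 2$. We have $n(G) = n(G')+4$, $m(G) = m(G')+4$, and the degree sequence of $G$ is obtained from that of $G'$ by: possibly incrementing the single entry $d_{G'}(y)$ by one, and inserting the four values $d_G(z)=1$, $d_G(x)=2$, $d_G(w)=2$, $d_G(v)=2$. Let $S'$ be an optimal annihilation set of $G'$, $|S'| = a(G')$, with $\sum(S',G') \le m(G')$. I would form $S = S' \cup \{z, x\}$ (two of the newly added low-degree vertices), adjusting for the possible $+1$ on $y$: if $y \notin S'$ this costs nothing in the sum over $S$; if $y \in S'$ then $\sum(S,G) \le \sum(S',G') + 1 + d_G(z) + d_G(x) = \sum(S',G') + 4 \le m(G') + 4 = m(G)$. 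Either way $\sum(S,G) \le m(G)$ and $|S| = a(G')+2$, giving $a(G) \ge a(G')+2$. Combining with Step 1: $\gamma_t(G) \le \gamma_t(G')+2 \le a(G')+1+2 = (a(G')+2)+1 \le a(G)+1$, as required.

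\medskip\noindent
\textbf{The main obstacle.} The delicate point is Step 2, and more precisely making sure that when we enlarge $S'$ by the two small-degree vertices $z,x$ the degree sum stays at most $m(G)$ \emph{even though} one old degree (that of $y$) may have gone up by one. The argument above handles this by noting $z$ and $x$ together contribute only $1+2 = 3$ to the degree sum in $G$ while $m$ increased by $4$, leaving a slack of $1$ that absorbs the possible increment at $y$; one must double-check $y$ is the only vertex of $G'$ whose degree changed, which follows from $h$ being non-adjacent to $\{x,w,v,z\}$ and from operation $o_1$ attaching the new path only at $y$. A secondary subtlety is that $a(G') \ge 1$ (so that $S'$ is nonempty and the bookkeeping is meaningful), which is immediate from $a(G') \ge \lfloor n(G')/2 \rfloor$ and $n(G') \ge 2$; in fact one should also confirm $x,z \notin S'$, which is automatic since $x,z \notin V(G')$, so $S = S' \cup \{x,z\}$ genuinely has $a(G')+2$ elements. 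No optimality or "threshold" property of $S'$ beyond $\sum(S',G') \le m(G')$ is actually needed, which keeps the estimate clean.
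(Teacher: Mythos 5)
Your proposal is correct and follows essentially the same strategy as the paper: bound $\gamma_t(G)\le\gamma_t(G')+2$ via $D'\cup\{w,v\}$, and show $a(G)\ge a(G')+2$ by extending an optimal annihilation set of $G'$ with two new degree-$\le 2$ vertices, with casework on whether $y$ lies in that set. The only (harmless) difference is that you add $\{z,x\}$ and absorb the $+1$ on $d(y)$ directly, whereas the paper adds $\{v,z\}$ when $y\notin S$ and swaps $y$ for $\{w,v,z\}$ when $y\in S$; both accountings close.
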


\begin{proof}
Let $D^{'}$ be a $\gamma_{t}$-set of $G^{'}$.
Then $D=D^{'}\cup \{w,\,v\}$ is a total dominating
set of $G$. So, $\gamma_{t}(G)\leq |D^{'}\cup \{w,\,v\}|= |D^{'}|+2=\gamma_{t}(G^{'})+2$. Next, we prove that
$a(G)\geq a(G^{'})+2$.

Clearly, $m(G) = m(G')+4$. Let $S$ be an optimal annihilation set of $G^{'}$. Then $\sum(S,\,G^{'}) \leq m(G')$. If $y\not\in S$, then $\sum(S\cup \{v,\,z\},\,G) = \sum(S,\,G^{'})+3 \leq m(G') + 3 < m(G)$. Thus, if $y\not\in S$, then $a(G)\geq |S\cup \{v,\,z\}|=|S|+2=a(G^{'})+2$. If $y\in S$, then we set $S^{*}=(S\setminus \{y\})\cup \{w,\,v,\,z\}$. As $d_{G^{'}}(y)\geq 1$, we have $\sum(S^{*},\,G) = \sum(S,\,G^{'})-d_{G^{'}}(y)+d_{G}(w)+d_{G}(v)+d_{G}(z) \leq \sum(S,\,G^{'})-1+5 \leq m(G') + 4 = m(G)$. Thus, also if $y\in S$, we have the conclusion $a(G)\geq |S^{*}|=|S|+2=a(G^{'})+2$.

In conclusion, if $\gamma_{t}(G^{'})\leq a(G^{'})+1$, then $\gamma_{t}(G) \leq \gamma_{t}(G^{'})+2\leq a(G^{'})+3\leq a(G)+1$.
\end{proof}

\begin{lemma}\label{lem2}
Let $G\in QT_{2}$ and let $G^{'}=G[V(G)\setminus \{x,\,w,\,v\}]$.
If  $\gamma_{t}(G^{'})\leq a(G^{'})+1$, then
 $\gamma_{t}(G)\leq a(G)+1$.
\end{lemma}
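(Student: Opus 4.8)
The plan is to mimic the proof of Lemma~\ref{lem1} essentially verbatim, adjusting only the bookkeeping for the fact that in $QT_2$ we remove three vertices $x,w,v$ (the last block added by operation $o_2$) rather than four, and that the vertex $y\in V(T)$ to which $x$ is attached has status $B$. First I would observe the two structural facts that make the argument go through: $m(G)=m(G')+3$ (the three edges $yx$, $xw$, $wv$ are all that is lost when deleting $\{x,w,v\}$, since by the definition of $QT_2$ the quasi-vertex $h$ sends no edges into $\{x,w,v\}$), and that $\{w,v\}$ can be appended to a total dominating set of $G'$ to totally dominate all of $G$: $v$ dominates $w$, $w$ dominates $x$ and $v$, and $x$ dominates $y$ while the neighbor of $y$ needed for $y$ itself lies in $G'$ and is already handled by $D'$. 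Hence $\gamma_t(G)\le \gamma_t(G')+2$.

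Next I would establish the annihilation-number inequality $a(G)\ge a(G')+2$. Take $S$ an optimal annihilation set of $G'$, so $\sum(S,G')\le m(G')$. In $G$ the added vertices have degrees $d_G(x)=2$, $d_G(w)=2$, $d_G(v)=1$. If $y\notin S$, then $S\cup\{w,v\}$ has degree sum in $G$ equal to $\sum(S,G')+d_G(w)+d_G(v)=\sum(S,G')+3\le m(G')+3=m(G)$, so $a(G)\ge|S|+2=a(G')+2$. If $y\in S$, replace it: put $S^{*}=(S\setminus\{y\})\cup\{x,w,v\}$; since $d_{G'}(y)\ge 1$ we get $\sum(S^{*},G)=\sum(S,G')-d_{G'}(y)+d_G(x)+d_G(w)+d_G(v)\le \sum(S,G')-1+5\le m(G')+4=m(G)$, so again $a(G)\ge|S^{*}|=|S|+2=a(G')+2$. (One should double-check the degree of $y$ in $G$ versus $G'$: since $h$ may or may not be adjacent to $y$, we have $d_G(y)\ge d_{G'}(y)+1$, but because we are only dropping $y$ from the set $S$ this only helps, and the bound using $d_{G'}(y)\ge1$ is what is needed.) Combining, $\gamma_t(G)\le\gamma_t(G')+2\le a(G')+3\le a(G)+1$, which is the claim.

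The only place that needs genuine care, and which I expect to be the main (minor) obstacle, is verifying that the choice $\{w,v\}$ really is enough to extend the total dominating set --- in particular that $y$ does get a neighbor in the extended set. Here one uses that $y$ has status $B$ in $T\in\Gamma_2$, and that in $\Gamma$ every status-$B$ vertex has a neighbor of status $A$ already present in $T$ (indeed $y$ is an internal vertex of $T$, never a leaf), so any $\gamma_t$-set $D'$ of $G'$ already totally dominates $y$ within $G'$; adding $x$ is not needed for that. Equivalently, one can simply note $w\in D$ dominates $x$, $v\in D$ dominates $w$, and the remaining vertices of $G$ (those of $G'$, including $y$ and $h$) are totally dominated already by $D'$ in $G'$, hence a fortiori in $G$. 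All other steps are the same routine degree-counting as in Lemma~\ref{lem1}.
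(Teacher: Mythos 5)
Your overall strategy matches the paper's: bound $\gamma_{t}(G)\leq \gamma_{t}(G^{'})+2$ by extending a $\gamma_t$-set $D^{'}$ of $G^{'}$ with two of the newly attached vertices (you use $\{w,v\}$ where the paper uses $\{x,w\}$; both choices work, and your justification that $y$ is already totally dominated by $D^{'}$ inside $G^{'}$ is correct), and then show $a(G)\geq a(G^{'})+2$ by the same two-case swap according to whether $y$ belongs to the optimal annihilation set $S$ of $G^{'}$. The case $y\notin S$ is fine as you wrote it.

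The case $y\in S$, however, contains a genuine error. You conclude $\sum(S^{*},\,G)\leq \sum(S,\,G^{'})-1+5\leq m(G^{'})+4=m(G)$, but you correctly established $m(G)=m(G^{'})+3$ at the outset, so $m(G^{'})+4\neq m(G)$ and the chain of inequalities does not close. The bound $d_{G^{'}}(y)\geq 1$ is what Lemma~\ref{lem1} uses, but there four edges are deleted and $m(G)=m(G^{'})+4$; you have imported that arithmetic into a setting where only three edges are deleted. What is needed here, and what the paper uses, is $d_{G^{'}}(y)\geq 2$: since $y$ has status $B$ it is never a leaf of the tree --- every status-$B$ vertex is created with degree $2$ (the two middle vertices of $P_6$, the vertices $x,w$ added by $o_1$, or the vertex $x$ added by $o_2$) and degrees only increase under later operations --- hence $d_{G^{'}}(y)\geq 2$, giving $\sum(S^{*},\,G)\leq m(G^{'})-2+5=m(G^{'})+3=m(G)$. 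You actually state the relevant structural fact (``$y$ is an internal vertex of $T$, never a leaf'') in your discussion of the domination step, but you never feed it into the degree computation where it is required; as written, that step fails.
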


\begin{proof}
Let $D^{'}$ be a $\gamma_{t}$-set of $G^{'}$.
Then $D=D^{'}\cup \{x,\,w\}$ is a total dominating
set of $G$. So, $\gamma_{t}(G)\leq |D^{'}\cup \{x,\,w\}|= |D^{'}|+2=\gamma_{t}(G^{'})+2$. Next, we prove that
$a(G)\geq a(G^{'})+2$.

Clearly, $m(G) = m(G')+3$. Let $S$ be an optimal annihilation set of $G^{'}$. Then $\sum(S,\,G^{'}) \leq m(G')$. If $y\not\in S$, then $\sum(S\cup \{w,\,v\},\,G)=\sum(S,\,G^{'})+3 \leq m(G')+3=m(G)$. Thus, $a(G)\geq |S\cup \{w,\,v\}|=|S|+2=a(G^{'})+2$. If $y\in S$, then we set $S^{*}=(S\setminus \{y\})\cup \{x,\,w,\,v\}$. Since $d_{G^{'}}(y)\geq 2$, we have $\sum(S^{*},\,G) = \sum(S,\,G^{'})-d_{G^{'}}(y)+d_{G}(x)+d_{G}(w)+d_{G}(v) =
\sum(S,\,G^{'})-d_{G^{'}}(y)+5 \leq m(G') -2+5 = m(G)$. Thus, $a(G)\geq |S^{*}|=|S|+2=a(G^{'})+2$.

Hence, when $\gamma_{t}(G^{'})\leq a(G^{'})+1$,
we  have $\gamma_{t}(G)\leq
\gamma_{t}(G^{'})+2\leq a(G^{'})+3\leq a(G)+1$.
\end{proof}

\begin{lemma}\label{lem3}
If $G\in QT_{3}$, then $\gamma_{t}(G)\leq a(G)+1$.
\end{lemma}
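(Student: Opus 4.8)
\textbf{Proof proposal for Lemma~\ref{lem3}.}

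The plan is to follow the same template as Lemmas~\ref{lem1} and~\ref{lem2}, but now the deletion of $\{x,w,v,z\}$ may disconnect $G'=G[V(G)\setminus\{x,w,v,z\}]$ or leave $G'$ with a vertex (the quasi-vertex $h$) that fails to be connected to the rest; crucially, when all $t\ge 2$ new edges go to $\{x,w,v,z\}$, the graph $G'$ is just the tree $T\in\Gamma_1$ minus its last-added path, hence $G'$ is again a tree (in fact a member of $\Gamma$ or a small exceptional graph), so Theorem~\ref{t1} applies directly to $G'$ and we need not assume the inequality as a hypothesis. First I would identify $G'$ precisely: since $T\in\Gamma_1$ was built by applying $o_1$ last, removing $x,w,v,z$ returns the tree $T_{k-1}$ from which $T$ was produced, together with the vertex $y$ to which $x$ was attached; thus $G'=T_{k-1}$ is a tree and $\gamma_t(G')\le a(G')+1$ by Theorem~\ref{t1}. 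The quasi-vertex $h$ of $G$ is now a vertex of degree $\le 4$ whose neighbors all lie in the deleted set, so $h$ is an isolated vertex of $G'$ only if $t$ counts all of its incident edges — but wait, $h\notin V(T)$, so $h$ is not in $T_{k-1}$ at all; rather $V(G)=V(T)\cup\{h\}$ and $V(G')=(V(T)\setminus\{x,w,v,z\})\cup\{h\}$, and in $G'$ the vertex $h$ is isolated. So $G'$ is a tree $T_{k-1}$ together with one isolated vertex $h$.

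Next I would handle the bookkeeping. We have $m(G)=m(T_{k-1})+4+t=m(G')+4+t$ where $t\ge 2$ is the number of edges from $h$ to $\{x,w,v,z\}$, and $m(G')=m(T_{k-1})$. For the domination side, a $\gamma_t$-set $D'$ of $T_{k-1}$ together with $\{w,v\}$ totally dominates all of $V(T)$ (exactly as in Lemma~\ref{lem1}, using that $z$ is dominated by $v$, $x$ by $w$, and $y$ by something in $D'$ since $y\in V(T_{k-1})$ is non-isolated there), and in addition $h$ must be dominated: $h$ has a neighbor among $x,w,v,z$, and at least one of $w,v$ is adjacent to $h$ unless all $t\ge2$ edges from $h$ go to $\{x,z\}$. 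If some edge from $h$ hits $w$ or $v$ we are done with $D=D'\cup\{w,v\}$, giving $\gamma_t(G)\le\gamma_t(G')+2$. If instead all edges from $h$ hit $\{x,z\}$, then we take $D=D'\cup\{w,v,x\}$ or observe that $hx$ (or $hz$) being an edge lets $h$ be dominated by adding one more vertex, so $\gamma_t(G)\le\gamma_t(G')+3$; I would need the annihilation bound $a(G)\ge a(G')+3$ in that subcase. Here we must recall $a(G')$: since $G'$ has an isolated vertex $h$ of degree $0$, that vertex is forced into every optimal annihilation set ``for free'', so $a(G')=a(T_{k-1})+1$.

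For the annihilation side I would take an optimal annihilation set $S$ of $G'$; it contains $h$. Paralleling Lemma~\ref{lem1}, if $y\notin S$ then $S\cup\{v,z\}$ is an annihilation set of $G$ because $\sum(S,G')+d_G(v)+d_G(z)=\sum(S,G')+3\le m(G')+3< m(G)$ (using $t\ge 2$ so $m(G)\ge m(G')+6$), giving $a(G)\ge a(G')+2\ge a(T_{k-1})+3$; and the domination bound $\gamma_t(G)\le\gamma_t(T_{k-1})+3\le a(T_{k-1})+4=a(G')+3\le a(G)+1$ closes the argument. If $y\in S$, swap it out: $S^*=(S\setminus\{y\})\cup\{w,v,z\}$, using $d_{G'}(y)\ge1$ and $d_G(w)+d_G(v)+d_G(z)\le 5$ (note $z$ may now have degree $1$ or $2$ depending on whether $h$ is adjacent to it, $w$ may have degree $2$ or $3$, etc., but the sum is bounded by $2+2+2=6$ in the worst case — I would verify this is still at most $m(G')+d_{G'}(y)$ using $m(G)\ge m(G')+6$), yielding again $a(G)\ge a(G')+2$. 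The main obstacle I anticipate is the careful case split according to where the $t\ge 2$ edges from $h$ attach within $\{x,w,v,z\}$: each choice perturbs the degrees of $x,w,v,z$ (and hence the constant ``$5$'' and the totally-dominating set) differently, and when the edges avoid $\{w,v\}$ one must spend a third vertex on domination and correspondingly squeeze a third vertex into the annihilation set — this is exactly where the slack $t\ge 2$ (as opposed to $t\ge 1$, which is relegated to $QT_5$) is consumed, so the inequalities are tight and the constants must be tracked with care.
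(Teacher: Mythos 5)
Your overall strategy follows the paper's template, but two things go wrong. The most serious is exactly the subcase you isolate at the end: all $t\ge 2$ edges from $h$ landing in $\{x,z\}$, which forces $t=2$ and $N_G(h)=\{x,z\}$. There you need a third vertex in the dominating set (a $5$-cycle $h x w v z$ hangs off $y$, and no two added vertices totally dominate it), hence you need $a(G)\ge a(T_{k-1})+3$; but the swaps you propose only ever gain two vertices. For instance $(S\setminus\{y\})\cup\{h,w,v,z\}$ gains three in cardinality but raises the degree sum by $-1+2+2+2+2=7$, while $m(G)-m(T_{k-1})=6$, so the required inequality fails whenever $\sum(S,T_{k-1})=m(T_{k-1})$. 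You explicitly say you ``would need'' this bound but never establish it, and it cannot be obtained by the local exchange alone. The paper sidesteps this entirely: when $t=2$ the graph $G$ is unicyclic, hence a cactus, and the Bujt\'as--Jakovac theorem for cacti is invoked; for $t\in\{3,4\}$ the vertex $h$ is automatically adjacent to $w$ or $v$, so $D'\cup\{w,v\}$ always works and only $a(G)\ge a(G')+2$ is needed.

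Second, your annihilation bookkeeping is off because you keep $h$ inside $G'$, where it is isolated. For a set $S$ containing $h$, passing from $G'$ to $G$ increases the degree sum by $t$ on account of $h$ alone (and by one more if $y\in S$), and $d_G(v)$, $d_G(z)$, $d_G(w)$ each exceed their values in $T$ by one for every edge they receive from $h$; so your claimed identity $\sum(S,G')+d_G(v)+d_G(z)=\sum(S,G')+3$ and the asserted worst-case bound $d_G(w)+d_G(v)+d_G(z)\le 6$ are both incorrect (e.g.\ if $h$ is adjacent to both $v$ and $z$ the first computation already overshoots $m(G)$). The clean fix is the paper's: delete $h$ together with $\{x,w,v,z\}$, so that $G'=T_{k-1}\in\Gamma$, Theorem~\ref{t1} gives $\gamma_t(G')=a(G')+1$ exactly, $y$ is a pendant vertex of $G'$ that can be taken inside an optimal annihilation set, and $m(G)=m(G')+t+4$ supplies precisely the slack needed for the single swap $(S\setminus\{y\})\cup\{z,w,v\}$.
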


\begin{proof}
Let $G^{'}=G[V(G)\setminus \{h,\,x,\,w,\,v,\,z\}]$. Then $G^{'} \in \Gamma$. By Theorem \ref{t1}, we have $\gamma_{t}(G^{'})=a(G^{'})+1$. Let $D^{'}$ be a $\gamma_{t}$-set of $G^{'}$ and let $d_{G}(h)=t$. Then  $m(G) = m(G') + t + 4$, where $2\leq t\leq 4$.

If $t=2$, then $G$ is a unicyclic graph. So $G$ is a cactus graph and by the validity of Conjecture~\ref{con1} for cacti due to  Bujt\'{a}s and Jakovac~\cite{Bujtas} we have $\gamma_{t}(G)\leq a(G)+1$. Assume in the rest that $t\in \{3,4\}$. Since $D=D^{'}\cup \{w,\,v\}$ is a total dominating set of $G$, we have $\gamma_{t}(G)\leq |D^{'}\cup \{w,\,v\}| = |D^{'}|+2=\gamma_{t}(G^{'})+2$. It remains to prove that $a(G)\geq a(G^{'})+2$.

Let $S$ be an optimal annihilation set of $G^{'}$. Then $\sum(S,\,G^{'}) \leq m(G')$. Since  $y$ is a pendent
vertex in $G^{'}$, we must have $y\in S$ by the definition of optimal annihilation sets. Let $S^{*}=(S\setminus \{y\})\cup \{z,\,w,\,v\}$. Since $d_{G^{'}}(y)=1$, we have $\sum(S^{*},\,G) = \sum(S,\,G^{'})-d_{G^{'}}(y)+d_{G}(z)+d_{G}(w)+d_{G}(v) = \sum(S,\,G^{'})-1+5+t \leq m(G')+4+t = m(G)$. Thus $a(G)\geq |S^{*}|=|S|+2=a(G^{'})+2$ and we are done.
\end{proof}

\begin{lemma}\label{lem4}
If $G\in QT_{4}$, then $\gamma_{t}(G)\leq a(G)+1$.
\end{lemma}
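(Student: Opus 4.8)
The plan is to mimic the arguments for Lemmas~\ref{lem2} and~\ref{lem3}. Write $T\in\Gamma_2$ for the underlying tree, let $y$ be the status-$B$ vertex of $T$ to which the last operation $o_2$ was applied, and let $x,w,v$ be the path attached to $y$ by the edge $xy$. Set $G'=G[V(G)\setminus\{h,x,w,v\}]$; deleting $h$ together with the three vertices added by the final $o_2$ step simply undoes that step, so $G'=T_{k-1}\in\Gamma$, and Theorem~\ref{t1} gives $\gamma_{t}(G')=a(G')+1$. I would then record the bookkeeping used throughout: since $h$ is joined to $t\ge 2$ of the three vertices $x,w,v$ we have $t\in\{2,3\}$ and $m(G)=m(G')+3+t$; moreover $d_{G'}(y)\ge 2$ because every status-$B$ vertex of a tree in $\Gamma$ has degree at least $2$; and, since in $T$ the vertices $x,w,v$ have degrees $2,2,1$ and together absorb all $t$ edges incident with $h$, we have $d_G(x)+d_G(w)+d_G(v)=5+t$.

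For the upper bound, let $D'$ be a $\gamma_t$-set of $G'$ and put $D=D'\cup\{x,w\}$. Every vertex of $G'$, in particular $y$, still has a neighbour in $D'$; the vertices $x$ and $w$ dominate each other, $w$ dominates $v$, and $h$ --- being adjacent to at least two of the three vertices $x,w,v$ --- is adjacent to $x$ or to $w$. Hence $D$ is a total dominating set of $G$ and $\gamma_{t}(G)\le\gamma_{t}(G')+2$. This is exactly the point where $QT_4$ is easier than $QT_3$: here there are only three ``central'' vertices, so $t\ge 2$ already forces $h$ to meet $\{x,w\}$ and no separate unicyclic/cactus subcase is required.

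For the lower bound I would prove $a(G)\ge a(G')+2$ by splitting on whether $y$ lies in a fixed optimal annihilation set $S$ of $G'$. If $y\notin S$, then $\sum(S\cup\{w,v\},G)=\sum(S,G')+d_G(w)+d_G(v)\le m(G')+3+t=m(G)$, since $d_G(w)+d_G(v)\le 3+t$; thus $S\cup\{w,v\}$ is an annihilation set of $G$ of size $a(G')+2$. If $y\in S$, then $S^{*}=(S\setminus\{y\})\cup\{x,w,v\}$ has $|S^{*}|=a(G')+2$ and $\sum(S^{*},G)=\sum(S,G')-d_{G'}(y)+(5+t)\le m(G')-2+5+t=m(G)$, again an annihilation set of the required size. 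Either way $a(G)\ge a(G')+2$, and combining with the upper bound yields $\gamma_{t}(G)\le\gamma_{t}(G')+2=a(G')+3\le a(G)+1$.

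The degree count and the two annihilation-set replacements are routine; the only places that really need care are the domination of $h$ --- where one must use that $h$ is adjacent to at least two of the mere three vertices $x,w,v$ --- and the exact bound $d_{G'}(y)\ge 2$ forced by $y$ having status $B$, which is what makes the case $y\in S$ gain $2$ rather than only $1$.
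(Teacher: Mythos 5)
Your proposal is correct and follows essentially the same route as the paper's proof: the same $G'=G[V(G)\setminus\{h,x,w,v\}]$, the same total dominating set $D'\cup\{x,w\}$, and the same case split on $y\in S$ versus $y\notin S$ with the replacement set $(S\setminus\{y\})\cup\{x,w,v\}$. The extra justifications you supply (why $h$ is dominated, why $d_{G'}(y)\ge 2$) are accurate and only make explicit what the paper leaves implicit.
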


\begin{proof}
Let $G^{'}=G[V(G)\setminus \{h,\,x,\,w,\,v\}]$. Then $G^{'}\in \Gamma$. By Theorem \ref{t1} we have
$\gamma_{t}(G^{'})=a(G^{'})+1.$
Let $D^{'}$ be a $\gamma_{t}$-set of $G^{'}$.
Since $t\geq 2$, $D=D^{'}\cup \{x,\,w\}$ is a total dominating
set of $G$. So, $\gamma_{t}(G)\leq |D^{'}\cup \{x,\,w\}|= |D^{'}|+2=\gamma_{t}(G^{'})+2$. Hence the lemma will be proved by showing that $a(G)\geq a(G^{'})+2$.

If $d_{G}(h)=t$, then $m(G) = m(G')+t+3$, where $t\in \{2,3\}$. Let $S$ be an optimal annihilation set of $G^{'}$. Then $\sum(S,\,G^{'}) \leq m(G')$. If $y\not\in S$, then $\sum(S\cup \{w,\,v\},\,G)=\sum(S,\,G^{'}) + d_{G}(w)+d_{G}(v)\leq m(G')+3+t = m(G)$. Thus, $a(G)\geq |S\cup \{w,\,v\}|=|S|+2=a(G^{'})+2$. If $y\in S$, then set $S^{*}=(S\setminus \{y\})\cup \{x,\,w,\,v\}$. Since $d_{G^{'}}(y)\geq 2$, we have $\sum(S^{*},\,G) = \sum(S,\,G^{'})-d_{G^{'}}(y)+d_{G}(x)+d_{G}(w)+d_{G}(v) = \sum(S,\,G^{'})-d_{G^{'}}(y)+5+t \leq m(G')-2+5+t = m(G)$. Thus, $a(G)\geq |S^{*}|=|S|+2=a(G^{'})+2$.
\end{proof}

\begin{lemma}\label{lem5}
Let $G\in QT_{5}$ and let
$G^{'}=G[V(G)\setminus \{x,\,w,\,v,\,z\}]$.
If  $\gamma_{t}(G^{'})\leq a(G^{'})+1$, then
 $\gamma_{t}(G)\leq a(G)+1$.
\end{lemma}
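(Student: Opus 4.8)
The plan is to follow the same template that works for Lemmas~\ref{lem1} and~\ref{lem2}: assuming $\gamma_t(G')\le a(G')+1$, I will exhibit a total dominating set of $G$ of size $\gamma_t(G')+2$ and an annihilation set of $G$ of size $a(G')+2$, and then chain the inequalities $\gamma_t(G)\le\gamma_t(G')+2\le a(G')+3\le a(G)+1$. For the total domination part, since $G'$ is a subgraph of $G$ containing the vertex $y$ to which $x$ is attached (via the path $xwvz$ from the last $o_1$-step), the set $D=D'\cup\{w,v\}$ totally dominates $G$: $D'$ dominates $G'$, $w$ dominates $x$ and $v$, $v$ dominates $w$ and $z$, and the quasi-vertex $h$ is dominated because at least one of its neighbours lies in $V(T)\setminus\{x,w,v,z\}\subseteq V(G')$ and this neighbour — or rather, $h$'s domination — must be checked; the cleanest fix is to note $t_2\ge 1$ means $h$ has a neighbour in $V(G')$, and that neighbour is dominated by $D'$, but $h$ itself needs a neighbour in $D$, so I should instead argue that $h$ has a neighbour in $D'$ or enlarge $D$. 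A safe choice is $D=D'\cup\{w,v\}$ together with one neighbour of $h$ if necessary, but since a $\gamma_t$-set $D'$ of $G'$ already has the property that every vertex of $G'$ has a neighbour in $D'$, and $h\notin V(G')$, I will instead add to the argument that $D'$ can be chosen to contain a neighbour of $h$ — this is the point to be careful about, and I will handle it by picking the $t_2$ edges appropriately or by observing $|D|=|D'|+2$ still holds after at most trivial adjustment.

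For the annihilation part I will compute $m(G)=m(G')+4+(t_1+t_2)$, where $t_1\ge 1$ edges join $h$ to $\{x,w,v,z\}$ and $t_2\ge 1$ edges join $h$ to $V(G')$, so $d_G(h)=t_1+t_2$. I take an optimal annihilation set $S$ of $G'$, so $\sum(S,G')\le m(G')$. Since $T\in\Gamma_1$ was built with a final $o_1$-operation, $z$ is a leaf and $y$ is (in $G'$) the vertex that received the path; as in Lemma~\ref{lem1} I split on whether $y\in S$. If $y\notin S$, I set $S^{*}=S\cup\{v,z\}$ and bound $\sum(S^{*},G)=\sum(S,G')+d_G(v)+d_G(z)\le m(G')+ (2+\epsilon_v)+(1+\epsilon_z)$ where $\epsilon_v,\epsilon_z\in\{0,1\}$ account for whether $v$ or $z$ is among the $t_1$ neighbours of $h$; since $t_1\ge 1$ forces at most... this is bounded by $m(G')+4+t_1\le m(G)$ provided $t_2\ge 1$, which holds. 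If $y\in S$, I set $S^{*}=(S\setminus\{y\})\cup\{w,v,z\}$ and use $d_{G'}(y)\ge 1$ to get $\sum(S^{*},G)=\sum(S,G')-d_{G'}(y)+d_G(w)+d_G(v)+d_G(z)\le m(G')-1+ (5+t_1)\le m(G)$, again using $t_2\ge 1$. Either way $a(G)\ge|S^{*}|=|S|+2=a(G')+2$.

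The main obstacle — and the step that differs from the clean pattern of Lemmas~\ref{lem1}--\ref{lem2} — is the presence of the quasi-vertex $h$ with its edges distributed on both sides of the cut $\{x,w,v,z\}\,|\,V(G')$: its degree $t_1+t_2$ enters $m(G)$ favourably (making the size inequality for $S^{*}$ easier), but if $h$ happens to be adjacent to $v$, $w$, or $z$ then those vertices have larger degree in $G$ than in $T$, which slightly worsens the $\sum(S^{*},G)$ estimate; I expect the bookkeeping to close because each unit added to some $d_G(\cdot)$ for a vertex in $\{x,w,v,z\}$ is matched by a unit added to $m(G)$, and the slack $t_2\ge 1$ absorbs the rest. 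A secondary subtlety is ensuring the total dominating set $D$ actually dominates $h$; I will resolve this by noting that a $\gamma_t$-set of $G'$ together with $\{w,v\}$ dominates every vertex of $V(G)\setminus\{h\}$, and then either $h$ has a neighbour already in $D'\cup\{w,v\}$ or, since $G$ is connected and $h$ has $\ge 2$ neighbours, one may swap to arrange this without increasing $|D|$ beyond $\gamma_t(G')+2$ — alternatively invoke that $\gamma_t(G)\le\gamma_t(G'-\text{leaf})+\cdots$; the precise tidy phrasing is what I would work out in the full proof.
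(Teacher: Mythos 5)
Your overall template (take $D=D'\cup\{w,v\}$, then split on whether $y$ lies in an optimal annihilation set $S$ of $G'$, using $S\cup\{v,z\}$ or $(S\setminus\{y\})\cup\{w,v,z\}$) is exactly the paper's. The genuine gap is a structural misreading of $QT_5$: only the four new path vertices $x,w,v,z$ are deleted to form $G'$, so the quasi-vertex $h$ \emph{belongs} to $V(G')$ (in contrast with $QT_3$ and $QT_4$, where $h$ is deleted as well). This has two consequences. First, the ``main obstacle'' you identify --- that $D'\cup\{w,v\}$ might fail to dominate $h$ --- does not exist: since $h\in V(G')$ and $D'$ is a total dominating set of $G'$, the vertex $h$ already has a neighbour in $D'$, while $x,w,v,z$ are each adjacent to $w$ or $v$. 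Your proposed repairs (forcing $D'$ to contain a neighbour of $h$, or ``swapping'' a vertex of $D$) are both unnecessary and unjustified as written --- a swap can destroy total domination elsewhere --- and you explicitly leave this step unfinished, so the key inequality $\gamma_t(G)\le\gamma_t(G')+2$ is never actually established in your proposal. Second, your edge count $m(G)=m(G')+4+(t_1+t_2)$ is wrong: the $t_2$ edges from $h$ to $V(T)\setminus\{x,w,v,z\}$ have both endpoints in $V(G')$ and are already counted in $m(G')$, so in fact $m(G)=m(G')+4+t_1$, as in the paper.

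As it happens, your annihilation estimates survive the corrected edge count: in the case $y\in S$ you reach $\sum(S^{*},G)\le m(G')+4+t_1=m(G)$ with equality rather than with the ``slack from $t_2\ge 1$'' you invoke, and in the case $y\notin S$ you reach $m(G')+3+t_1<m(G)$; so the arithmetic closes, but not for the reasons you give. One further subtlety, which you partially anticipate for $v$ and $z$ but not for $h$ itself (and which the paper's own computation also passes over): if $h\in S$, then $d_G(h)=d_{G'}(h)+t_1$, so $\sum(S,G)\ne\sum(S,G')$ and the identities for $\sum(S^{*},G)$ acquire an extra $+t_1$; a fully careful argument must either account for this or modify $S^{*}$ when $h\in S$. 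To repair your writeup along the paper's lines: observe $h\in V(G')$, conclude immediately that $D'\cup\{w,v\}$ is a total dominating set of $G$, correct $m(G)$ to $m(G')+4+t_1$, and then run your two cases, treating the possible membership of $h$ in $S$ explicitly.
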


\begin{proof}
Let $D^{'}$ be a $\gamma_{t}$-set of $G^{'}$.
Then $D=D^{'}\cup \{w,\,v\}$ is a total dominating
set of $G$. So, $\gamma_{t}(G)\leq |D^{'}\cup \{w,\,v\}|= |D^{'}|+2=\gamma_{t}(G^{'})+2$.

Let $d_{G}(h)=t$ and recall that $t = t_1 + t_2$ by the definition of $ QT_{5}$. Then $m(G) = m(G') +t_{1}+4$, where $t_{1} \in [t-1]$. Let $S$ be an optimal annihilation set of $G^{'}$. Then $\sum(S,\,G^{'}) \leq m(G')$.

If $h\not\in S$ and $y\not\in S$, then $\sum(S\cup \{v,\,z\},\,G)=\sum(S,\,G^{'}) + d_{G}(v)+d_{G}(z)\leq m(G')+3+t_{1} < m(G)$. Thus, $a(G)\geq |S\cup \{v,\,z\}|=|S|+2=a(G^{'})+2$. If $h\not\in S$ and $y\in S$, then set $S^{*}=(S\setminus \{y\})\cup \{z,\,w,\,v\}$. As $d_{G^{'}}(y)\geq 1$, we have $\sum(S^{*},\,G) = \sum(S,\,G^{'})-d_{G^{'}}(y)+d_{G}(z)+d_{G}(w)+d_{G}(v)\leq \sum(S,\,G^{'})-1+5+t_{1}\leq m(G')+4+t_{1} = m(G)$. Thus, $a(G)\geq |S^{*}|=|S|+2=a(G^{'})+2$. By our assumption that $\gamma_{t}(G^{'})\leq a(G^{'})+1$, $\gamma_{t}(G)\leq a(G)+1$.

Now, we consider the case when $h\in S$.

If $t_{2}\geq 2$, since $d_{G^{'}}(h)=t_{2}$ and $y$ may belong to $S$, then $\sum((S\setminus\{h\})\cup \{w,\,v,\,z\},\,G)\leq \sum(S,\,G^{'})-d_{G^{'}}(h)+1 + d_{G}(w)+ d_{G}(v)+d_{G}(z)\leq m(G')-2+1+5+t_{1}=m(G)$. Thus, $a(G)\geq |(S\setminus\{h\})\cup \{w,\,v,\,z\}|=|S|+2=a(G^{'})+2$. By our assumption that $\gamma_{t}(G^{'})\leq a(G^{'})+1$, $\gamma_{t}(G)\leq a(G)+1$.

Now, we suppose that $t_{2}=1$. If $t_{1}=1$, then $G$ is a unicyclic graph. So $G$ is a cactus graph and by the validity of Conjecture 1.1 for cacti due to  Bujt\'{a}s and Jakovac~\cite{Bujtas} we have $\gamma_{t}(G)\leq a(G)+1$. If $t_{1}=2$, then $G$ is a $C$-disjoint graph, since it can be obtained by planting trees to vertices of a  theta graph. Hence, by the validity of Conjecture 1.1 for $C$-disjoint graphs due to  Yue et al.~\cite{Yue},  we have $\gamma_{t}(G)\leq a(G)+1$. So, we may assume that $t_{1}\geq 3$.

Let $G^{''}=G-\{h,\,x,\,w,\,v,\,z\}$. Then $m(G'')+5+t_{1}=m(G)$. Let $S^{''}$ be an optimal annihilation set of $G^{''}$. Then $\sum(S^{''},\,G^{''}) \leq m(G'')$. Obviously, $G^{''}\in\Gamma$. By Theorem 2.2, we have
$\gamma_{t}(G^{''})=a(G^{''})+1.$ Let $D^{''}$ be a $\gamma_{t}$-set of $G^{''}$. Since $t_{1}\geq 3$, then $D^{''}\cup \{w,\,v\}$ is a total dominating set of $G$.
Then $\gamma_{t}(G)\leq |D^{''}\cup \{w,\,v\}|=\gamma_{t}(G^{''})+2$.
Now, we prove that $a(G)\geq a(G^{''})+2$.
Since $y$ and the unique neighbor of $h$ in $G^{''}$ may belong to $S^{''}$, we have $\sum(S^{''}\cup \{v,\,z\},\,G)\leq \sum(S^{''},\,G^{''})+2+d_{G}(v)+d_{G}(z)\leq m(G'')+2+t_{1}+3=m(G)$. Thus, $a(G)\geq |S^{''}\cup \{v,\,z\}|=|S^{''}|+2=a(G^{''})+2$. Therefore, $\gamma_{t}(G)\leq a(G)+1$.
\end{proof}

\begin{lemma}\label{lem6}
Let $G\in QT_{6}$. Then
 $\gamma_{t}(G)\leq a(G)+1$.
\end{lemma}

\begin{proof}
Let $d_{G}(h)=t$ and recall that $t = t_1 + t_2$ by the definition of $ QT_{6}$. Let $G^{'}=G-\{h,\,x,\,w,\,v\}$. Then $G^{'}\in\Gamma$. By Theorem \ref{t1}, we have
$\gamma_{t}(G^{'})=a(G^{'})+1.$ Let $D^{'}$ be a $\gamma_{t}$-set of $G^{'}$. Then either $D=D^{'}\cup \{x,\,w\}$ or $D=D^{'}\cup \{w,\,v\}$ is a total dominating set of $G$. So, $\gamma_{t}(G)\leq |D^{'}\cup \{x,\,w\}| = |D^{'}|+2=\gamma_{t}(G^{'})+2$ or $\gamma_{t}(G)\leq |D^{'}\cup \{w,\,v\}| = |D^{'}|+2=\gamma_{t}(G^{'})+2$. To complete the argument we next again prove that $a(G)\geq a(G^{'})+2$.

Obviously, $m(G)= m(G')+t_{1}+t_{2}+3$, where $t_{1},\,t_{2}\in [t-1]$. Let $S$ be an optimal annihilation set of $G^{'}$. Then $\sum(S,\,G^{'}) \leq m(G')$.

If $y\not\in S$, then $\sum(S\cup \{w,\,v\},\,G)\leq\sum(S,\,G^{'})+ t_{2}+d_{G}(w)+d_{G}(v) \leq m(G')+t_{2}+t_{1}+3= m(G)$. Thus, $a(G)\geq |S\cup \{w,\,v\}|=|S|+2=a(G^{'})+2$. If $y\in S$, since $d_{G^{'}}(y)\geq 2$, then $\sum((S\setminus \{y\})\cup \{x,\,w,\,v\},\,G)\leq\sum(S,\,G^{'})-d_{G^{'}}(y)+ t_{2}+d_{G}(x)+d_{G}(w)+d_{G}(v) \leq m(G')-2+t_{2}+t_{1}+5= m(G')+t_{1}+t_{2}+3= m(G)$. Thus, $a(G)\geq |(S\setminus \{y\})\cup \{x,\,w,\,v\}|=|S|+2=a(G^{'})+2$, and we are done.
\end{proof}

With the above lemmas in hand we are now in a position to prove Theorem~\ref{t3}. If $G$ is itself a tree, then the result follows from Theorem~\ref{t1}. Assume hence that $G$ contains at least one cycle. If $n(G)=3$, then
$G = C_3$ for which $\gamma_{t}(C_{3})=2=a(C_{3})+1$ holds. We may thus assume in the rest that $n(G)\geq 4$. We consider the following two cases.

\medskip\noindent
{\bf Case 1}: $G$ is a type-$2$ quasi-tree.\\
Let $h$ be a quasi-vertex of $G$ and let $d_{G}(h)=t$. Since $G$ has at least one cycle, we have $t\geq 2$. Since $G$ is a type-$2$ quasi-tree, $G-h\not\in \Gamma$. Let $S$ be an optimal annihilation set of $G-h$. As
$$\sum(S,\,G)\leq\sum(S,\,G-h)+t\leq m(G-h)+t=m(G)\,,$$
we have $a(G)\geq |S|=a(G-h)$. We further consider the following two subcases.

\medskip\noindent
{\bf Case 1.1}: There exists a $\gamma_{t}$-set $D$ of $G-h$  such that $N_{G}(h)\cap D\neq \emptyset$.\\
In this case, $D$ is also a  total dominating set of $G$.
So, $\gamma_{t}(G)\leq |D|=\gamma_{t}(G-h)$.
Since $G-h$ is a tree, by Theorem \ref{t1},
we have $$\gamma_{t}(G)\leq
\gamma_{t}(G-h)\leq a(G-h)+1\leq a(G)+1.$$

\medskip\noindent
{\bf Case 1.2}: For each $\gamma_{t}$-set $D$ of $G-h$ we have $N_{G}(h)\cap D=\emptyset$. \\
Let $\mathcal{S}(G-h)$ and $\mathcal{L}(G-h)$  be the set of support vertices and the set of pendent vertices of $G-h$, respectively.  Since $G-h$ is a tree, each vertex $u\in \mathcal{S}(G-h)$ belongs to every $\gamma_{t}$-set of $G-h$. By our assumption that $N_{G}(h)\cap D=\emptyset$ for each $\gamma_{t}$-set $D$ of $G-h$, we have $\mathcal{S}(G-h)\cap N_{G}(h)=\emptyset$. We now claim that $\gamma_{t}(G)\leq \gamma_{t}(G-h)+1$.

First, assume that there exists a pendent vertex $x$ in $\mathcal{L}(G-h)$ such that $x\in  N_{G}(h)$. Let $D$ be a $\gamma_{t}$-set of $G-h$. Then $D\cup\{x\}$ is a total dominating set of $G$, as the unique neighbor of $x$ in $G-h$ is a support vertex belonging to $D$. So, $\gamma_{t}(G)\leq |D\cup\{x\}|=\gamma_{t}(G-h)+1$. Second, assume that  for any pendent vertex $x$ of $G-h$ we have $x\not\in  N_{G}(h)$. Then  there exists a vertex $y$ in $V(G-h)\setminus (\mathcal{S}(G-h)\cup\mathcal{L}(G-h))$  such that $y\in  N_{G}(h)$, as $t\geq 2$.  Since  $D$ is a  $\gamma_{t}$-set of $G-h$,  there exists a neighbor of $y$, say $z$, such that $z\in D$. Then $D\cup\{y\}$ is a total dominating set of $G$. Hence $\gamma_{t}(G)\leq |D\cup\{y\}|=\gamma_{t}(G-h)+1$ and the claim is proved.

Since $G-h\not\in \Gamma$ and $G-h\ncong P_{2}$, Theorem \ref{t1} implies that $\gamma_{t}(G-h)\leq a(G-h)$. So, $$\gamma_{t}(G)\leq
\gamma_{t}(G-h)+1\leq a(G-h)+1\leq a(G)+1\,,$$
which completes the argument for type-2 quasi-trees.

\medskip\noindent
{\bf Case 2}: $G$ is a type-1 quasi-tree.\\
Let $h$ be a quasi-vertex of $G$, such that $G-h\in \Gamma$. To prove that $\gamma_{t}(G)\leq  a(G)+1$ we use induction on $f(G) = n(G)+ m(G)+ n_{1}(G)$, where $n_1(G)$ is the number of leaves of $G$.

Let $d_{G}(h)=t$. Since $G$ has at least one cycle, $t\geq 2$. Since $G-h\in \Gamma$, we have $f(G) = n(G)+ m(G)+ n_{1}(G)\geq (1+6)+(5+t)+0\geq (1+6)+(5+2)+0=14$ with equality only if $n(G)=m(G)=7$ and $n_{1}(G)=0$. (Recall that a quasi-tree $G$ is connected.) So the base case of our induction is $f(G)=14$, in which case  we have $G\cong C_{7}$. As $\gamma_{t}(C_{7})=4=3+1=a(C_{7})+1$, the desired result holds for the base case.

Suppose now that $G$ is a type-1 quasi-tree with $f(G)\geq 15$. Since $G-h\in \Gamma$, we must have
$G\in QT_{i}$ for some $i \in [6]$.

If $G\in QT_{1}$, set  $G^{'}=G[V(G)\setminus \{x,\,w,\,v,\,z\}]$. Then $G^{'}$ is a type-1 quasi-tree, as  $G^{'}-h \in \Gamma$. Obviously, $f(G^{'})<f(G)$. So, by the induction  hypothesis,  $\gamma_{t}(G^{'})\leq  a(G^{'})+1$. Thus, by Lemma~\ref{lem1}, we have  $\gamma_{t}(G)\leq  a(G)+1$.

If $G\in QT_{2}$, we set $G^{'} = G[V(G)\setminus \{x,\,w,\,v\}]$. Then $G^{'}$ is a type-1 quasi-tree as $G^{'}-h \in \Gamma$. Clearly, $f(G^{'})<f(G)$ and hence by the induction  hypothesis, $\gamma_{t}(G^{'})\leq  a(G^{'})+1$. Lemma~\ref{lem2} implies that $\gamma_{t}(G)\leq  a(G)+1$.

If $G\in QT_{3}$, then $\gamma_{t}(G)\leq  a(G)+1$ holds by Lemma \ref{lem3}, and if $G\in QT_{4}$, then the same conclusion follows from Lemma~\ref{lem4}.

If $G\in QT_{5}$, set $G^{'}=G[V(G)\setminus \{x,\,w,\,v,\,z\}]$. Then $G^{'}$ is a tree or a type-1 quasi-tree with $f(G^{'})<f(G)$. If $G^{'}$ is a tree, then $\gamma_{t}(G^{'})\leq a(G^{'})+1$ by Theorem~\ref{t1} and consequently $\gamma_{t}(G)\leq  a(G)+1$ by Lemma~\ref{lem5}. If $G^{'}$ is a type-1 quasi-tree, then since $f(G^{'})<f(G)$, the induction  hypothesis yields $\gamma_{t}(G^{'})\leq  a(G^{'})+1$. Thus $\gamma_{t}(G)\leq  a(G)+1$ by Lemma~\ref{lem5}.

Finally, if $G\in QT_{6}$, then $\gamma_{t}(G)\leq  a(G)+1$ by Lemma~\ref{lem6}.

We have completed the argument for Case 2 which completes the proof of Theorem~\ref{t3}.

\section{Composition graphs}
\label{sec:composite}

In this section, we prove that Conjecture \ref{con1} holds for some composition graphs, the first four of which can have minimum degree equal to $2$, and the last of which can also have minimum degree $1$.

\subsection{Triangulations of graphs}

The {\em triangulation}, $\tau(G)$, of a graph $G$, is the graph obtained from $G$ by adding, for each edge $e = uv$ of $G$, a new vertex $x_e$ and the two edges $x_eu$ and $x_ev$,  cf.~\cite{Rosenfeld}.

\begin{proposition}
If $G$ is a connected graph, then $\gamma_{t}(\tau(G))\leq a(\tau(G))+1$.
\end{proposition}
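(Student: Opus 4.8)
The plan is to compute, or at least bound, both $\gamma_t(\tau(G))$ and $a(\tau(G))$ directly in terms of parameters of $G$, namely $n = n(G)$ and $m = m(G)$. First I would record the basic structural facts: $\tau(G)$ has $n(\tau(G)) = n + m$ vertices and $m(\tau(G)) = 2m$ edges; each original vertex $u$ has $d_{\tau(G)}(u) = 2d_G(u)$, while each subdivision-type vertex $x_e$ has degree exactly $2$. So $\tau(G)$ has exactly $m$ vertices of degree $2$ and $n$ vertices of even degree $\geq 2$ (here I use that $G$ is connected, hence has no isolated vertices).

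Next I would bound the annihilation number from below. Since the $m$ vertices $\{x_e : e \in E(G)\}$ all have degree $2$, and they are the vertices of smallest degree (every original vertex has degree $\geq 2$ as well, but ties are broken so that these $x_e$'s are taken first), the set $S_0 = \{x_e : e \in E(G)\}$ has $\sum(S_0, \tau(G)) = 2m = m(\tau(G))$, which shows $a(\tau(G)) \geq m$. In fact I expect one can say more: after including all $m$ of the $x_e$'s one has used up the entire ``budget'' $m(\tau(G)) = 2m$, so $a(\tau(G)) = m$ unless $G$ itself has vertices of degree... — actually since any original vertex has degree $2d_G(u) \geq 2$, adding even one more vertex to $S_0$ exceeds $2m$, hence $a(\tau(G)) = m$ exactly when every original vertex has degree $\geq 1$, i.e.\ always. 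So $a(\tau(G)) = m(G)$.

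Then I would bound $\gamma_t(\tau(G))$ from above. The natural total dominating set is $V(G) \subseteq V(\tau(G))$: every $x_e$ with $e = uv$ has both $u,v$ as neighbors, so it is dominated; every original vertex $u$ has a neighbor $x_e$ for each incident edge $e$, but those are not in $V(G)$ — so I instead want each original vertex to have a neighbor in the dominating set, which holds iff the dominating set, restricted to $V(G)$, is itself a total dominating set of $G$ when we account for adjacency in $\tau(G)$. Two original vertices are never adjacent in $\tau(G)$, so $V(G)$ alone does not totally dominate the original vertices. The fix: take $D = V(G) \cup \{x_e\}$ for a single well-chosen edge $e = uv$; then $u$ and $v$ each gain the neighbor $x_e \in D$, but the other original vertices still have no neighbor in $D$. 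This shows the naive approach needs refinement. A cleaner route: pick a spanning connected subgraph and use a total dominating set of $G$ — if $D_G$ is a $\gamma_t$-set of $G$, then for each $u \in V(G)$ pick a neighbor $u' \in D_G$ and the edge $uu' =: e_u$; set $D = D_G \cup \{x_{e_u} : u \in V(G)\}$. Each original vertex $u$ has neighbor $x_{e_u} \in D$; each $x_e$ with $e = ab$ has $a, b$ as neighbors, and since $D_G$ is a total dominating set, some neighbor of (say) $a$ lies in $D_G$, but we need $a$ or $b$ itself in $D$ — hmm, $x_e$'s only neighbors are $a$ and $b$. This forces $V(G) \subseteq D$ essentially. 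So the honest bound is $\gamma_t(\tau(G)) \leq n(G) + (\text{something small})$, and the main obstacle is getting this down to $\leq m(G) + 1 = a(\tau(G)) + 1$. This works as long as $m(G) \geq n(G) - 1$, i.e.\ always for connected $G$, since then $V(G)$ together with one extra vertex (to fix the domination of original vertices via a spanning-tree-style argument, e.g.\ adding $\lceil n/2 \rceil$ or so of the $x_e$) stays within budget — but when $G$ is a tree, $m = n-1$ and $n + O(1) > m + 1$ fails, so the tree case needs separate (careful) treatment, likely invoking Theorem~\ref{t1} or a direct pairing argument. I expect the main obstacle to be exactly this tight regime where $G$ is a tree or near-tree, where one must argue $\gamma_t(\tau(G)) \leq n(G)$ precisely (not $n(G)+1$), presumably by showing a $\gamma_t$-set avoiding at least one original vertex whenever $G$ has a leaf, and handling $\gamma_t(\tau(G))$ for $G$ a cycle or small graph by hand. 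A concluding remark: the whole argument hinges on $\tau(G)$ having $m(G)$ vertices of minimum degree $2$, which pins $a(\tau(G)) = m(G)$, so everything reduces to proving $\gamma_t(\tau(G)) \leq m(G) + 1$, and I would phrase the final write-up around that single target inequality.
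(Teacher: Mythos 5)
There is a genuine gap, and it originates in a misreading of the construction. The triangulation $\tau(G)$ is obtained \emph{from $G$} by adding $x_e$ and the edges $x_eu$, $x_ev$ for each $e=uv$; the original edges of $G$ are retained (each edge of $G$ becomes a triangle, hence the name). Consequently $m(\tau(G)) = m(G) + 2m(G) = 3m(G)$, not $2m(G)$, and two original vertices adjacent in $G$ remain adjacent in $\tau(G)$. Your own degree formula $d_{\tau(G)}(u) = 2d_G(u)$ is the correct one for the triangulation, but it contradicts your edge count via the handshake lemma ($\sum_u 2d_G(u) + 2m = 6m = 2\cdot 3m$); you have in effect analysed the subdivision of $G$ rather than $\tau(G)$. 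The lower bound $a(\tau(G)) \geq m(G)$ survives this error (the $m(G)$ vertices $x_e$ have degree sum $2m(G) \leq m(\tau(G))$ under either count), but your claim that $a(\tau(G)) = m(G)$ exactly does not: with the correct budget $3m(G)$ one can typically add further low-degree original vertices.

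The more serious consequence is in the total-domination half, which your proposal never completes. Because you assume ``two original vertices are never adjacent in $\tau(G)$,'' you are driven into an unresolved search for a dominating set, ending with the admission that the tree case ``needs separate (careful) treatment.'' With the correct reading the difficulty evaporates: every original vertex keeps a $G$-neighbour inside $V(G)$, and every $x_e$ is adjacent to both endpoints of $e$, so $V(G)$ itself (indeed, a suitably chosen set of $n(G)-1$ original vertices, which is what the paper uses) is a total dominating set of $\tau(G)$. This gives $\gamma_t(\tau(G)) \leq n(G) \leq m(G)+1 \leq a(\tau(G))+1$ immediately, with no case analysis for trees or cycles. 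As written, your argument both rests on an incorrect model of $\tau(G)$ and stops short of establishing the target inequality $\gamma_t(\tau(G)) \leq m(G)+1$, so it cannot be accepted as a proof.
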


\begin{proof}
Note first that $m(\tau(G)) = 3m(G)$. Let $S=V(\tau(G))\setminus V(G)$. Then $\sum(S,\,\tau(G)) = 2m(G) < 3m(G) = m(\tau(G))$. Thus, $a(\tau(G))\geq |S| = m(G)$. Let $D$ be a vertex subset composed of arbitrary  $n(G)-1$ vertices of $G$. Since $G$ is connected, we infer that $D$ is a total dominating set of $\tau(G)$. So, $\gamma_{t}(\tau(G))\leq |D| = n(G)-1 \leq m(G) < a(\tau(G))+1$.
\end{proof}

\subsection{Double graphs}

The {\em double graph}, $G^{*}$, of a graph $G$ is constructed as follows. Let $G_1$ and $G_2$ be disjoint  copies of $G$, where for every $u\in V(G)$ its copy in $G_i$, $i\in [2]$, is denoted by $u_i$. Then $G^{*}$ is obtained from the disjoint union of $G_1$ and $G_2$ by adding, for each edge $uv$ of $G$, the edges $u_{1}v_{2}$ and $u_{2}v_{1}$, cf.~\cite{Munarini}.

\begin{proposition}
If $G$ is a connected graph with $\gamma_{t}(G)\leq a(G)+1$, then $\gamma_{t}(G^{*})\leq a(G^{*})+1$.
\end{proposition}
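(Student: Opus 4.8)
The plan is to relate the parameters of $G^{*}$ directly to those of $G$. First I would record the easy structural facts about the double graph: it has $n(G^{*}) = 2n(G)$ vertices and $m(G^{*}) = 4m(G)$ edges, and for every $u\in V(G)$ the two copies $u_1,u_2$ satisfy $d_{G^{*}}(u_1) = d_{G^{*}}(u_2) = 2d_G(u)$. In particular the degree sequence of $G^{*}$ is just the degree sequence of $G$ with every entry doubled and then listed twice. This is the key observation that will drive both bounds.

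Next I would bound the total domination number. If $D$ is a $\gamma_t$-set of $G$, then I claim $D_1\cup D_2$ (the union of the two copies of $D$) is a total dominating set of $G^{*}$: for a vertex $u_i$, a neighbor of $u$ in $D$ gives, inside $G^{*}$, a neighbor of $u_i$ lying in $D_1$ or $D_2$, using both the ``straight'' edges $u_iv_i$ and the ``crossed'' edges $u_iv_{3-i}$. Hence $\gamma_t(G^{*})\le 2\gamma_t(G)$. (One should double-check the degenerate case $n(G)=1$, where $G^{*}=K_2$ and $\gamma_t=2$, so the bound still holds; but since we assume $\gamma_t(G)\le a(G)+1$ and $G$ connected, small cases are fine.)

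Then I would bound the annihilation number from below. Let $S$ be an optimal annihilation set of $G$, so $|S| = a(G)$ and $\sum(S,G)\le m(G)$. Consider $S^{*} = S_1\cup S_2\subseteq V(G^{*})$; then $|S^{*}| = 2a(G)$ and $\sum(S^{*},G^{*}) = 2\cdot 2\sum(S,G) = 4\sum(S,G)\le 4m(G) = m(G^{*})$. Therefore $a(G^{*})\ge 2a(G)$. Combining, $\gamma_t(G^{*})\le 2\gamma_t(G)\le 2(a(G)+1) = 2a(G)+2\le a(G^{*})+2$. This gives $\gamma_t(G^{*})\le a(G^{*})+2$, which is one short of what we want, so a naive argument does not suffice.

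The main obstacle is therefore closing this gap of $1$. I see two avenues. The first is to improve the annihilation bound to $a(G^{*})\ge 2a(G)+1$: since $\sum(S^{*},G^{*}) = 4\sum(S,G)$ is a strict multiple and $m(G^{*}) = 4m(G)$, there is typically slack, and one can try to adjoin one more vertex of smallest degree (a copy $v_i$ with $2d_G(v)$ minimal among vertices outside $S^{*}$) while keeping the degree sum at most $m(G^{*})$; this works unless $\sum(S,G) = m(G)$ exactly and the next degree is too large, a boundary case that needs separate handling (possibly invoking parity of $m(G^{*})$, which is even). The second avenue is to improve the domination bound to $\gamma_t(G^{*})\le 2\gamma_t(G)-1$ in those boundary cases, for instance by observing that if $D$ is a $\gamma_t$-set of $G$ then often $D_1\cup(D_2\setminus\{v_2\})$ still totally dominates $G^{*}$ for a suitable $v$, because the crossed edges provide redundancy. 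I would pursue the annihilation-side improvement first, treating the ``tight'' case $\sum(S,G)=m(G)$ by hand, since that is where $a(G)$ is exactly determined and extra degree budget in $G^{*}$ is most transparent.
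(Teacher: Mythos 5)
There is a genuine gap: you never complete the proof, and the factor-of-two symmetrization you chose is what creates the difficulty. By taking $D_1\cup D_2$ and $S_1\cup S_2$ you arrive at $\gamma_t(G^{*})\le 2\gamma_t(G)$ and $a(G^{*})\ge 2a(G)$, which only yields $\gamma_t(G^{*})\le a(G^{*})+2$, and then you sketch two repair strategies without carrying either out. The first strategy, improving the annihilation bound to $a(G^{*})\ge 2a(G)+1$, is false in general: for $G=K_2$ we have $a(G)=1$, $G^{*}\cong C_4$, and $a(C_4)=2=2a(G)$ exactly (here $\sum(S,G)=m(G)$, your ``tight'' case, and the parity of $m(G^{*})$ does not help). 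The second strategy ($\gamma_t(G^{*})\le 2\gamma_t(G)-1$) would suffice arithmetically, but you only assert that deleting one vertex from $D_2$ ``often'' works, which is not a proof.

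The missing idea is that the crossed edges make a \emph{single} copy of a total dominating set of $G$ already a total dominating set of $G^{*}$. If $D$ is a $\gamma_t$-set of $G$ and $u\in V(G)$ has a neighbor $v\in D$, then $v_1\in D_1$ is adjacent to $u_1$ (straight edge) and also to $u_2$ (crossed edge), so $D_1$ totally dominates all of $G^{*}$ and $\gamma_t(G^{*})\le \gamma_t(G)$. With this, even the crude annihilation bound obtained from one copy of an optimal annihilation set $S$ of $G$ suffices: $\sum(S_1,G^{*})=2\sum(S,G)\le 2m(G)<4m(G)=m(G^{*})$, so $a(G^{*})\ge a(G)$, and therefore $\gamma_t(G^{*})\le \gamma_t(G)\le a(G)+1\le a(G^{*})+1$. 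This is exactly the paper's argument; your structural observations (degree doubling, $m(G^{*})=4m(G)$) are correct and would have led there had you tested whether one copy of $D$ suffices before doubling it.
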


\begin{proof}
Clearly, $m(G^{*}) = 4m(G)$. Let $S$ be an optimal annihilation set of $G$. Then $\sum(S,\,G)\leq m(G)$. So, $\sum(S,\,G^{*})=2\sum(S,\,G) \leq 2m(G) < 4m(G)$. Thus, $a(G^{*})\geq |S|=a(G)$.

Let $D$ be a $\gamma_{t}$-set of $G$. Then it is straightforward to see that the copy of $D$ in $G_1$ (or in $G_2$ for that matter) is a total dominating set of $G^{*}$. It follows that $\gamma_{t}(G^{*}) \leq |D| =\gamma_{t}(G)$. Hence, if  $\gamma_{t}(G)\leq a(G)+1$, then $\gamma_{t}(G^{*})\leq
\gamma_{t}(G)\leq a(G)+1\leq a(G^{*})+1$.
\end{proof}

\subsection{Bijection graphs}

Let $G$ and $H$ be disjoint graphs with $n(G) = n(H)$ and let $f : V(G) \rightarrow V(H)$ be a bijection. The {\em bijection graph} $B(G,H,f)$ is obtained from the disjoint union of $G$ and $H$ by adding the edges $uf(u)$, $u\in V(G)$, cf.~\cite{Shcherbak}. If $G\cong H$, then the bijection graph $B(G, H ,f)$ is also known as  {\em permutation graph}.

\begin{proposition}
If $G$ and $H$ are connected graphs with $n(G) = n(H)$ and $f:V(G)\rightarrow V(H)$ is a bijection, then $\gamma_{t}(B(G,H,f)) \leq a(B(G,H,f)) + 1$.
\end{proposition}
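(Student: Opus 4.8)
The plan is to mimic the two preceding propositions: bound the annihilation number from below by exhibiting a large annihilation set, and bound the total domination number from above by exhibiting a small total dominating set. Write $B = B(G,H,f)$, and let $n = n(G) = n(H)$, so $n(B) = 2n$ and $m(B) = m(G) + m(H) + n$, since the bijection contributes exactly $n$ matching edges. In $B$ every vertex $u \in V(G)$ has degree $d_G(u) + 1$ and every vertex $w \in V(H)$ has degree $d_H(w) + 1$.

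For the annihilation side, the natural candidate is $S = V(G)$ (or $V(H)$, whichever has smaller size of degree sum; by symmetry assume $\sum(V(G),G) \le \sum(V(H),H)$). Then $\sum(S, B) = \sum_{u \in V(G)} (d_G(u)+1) = 2m(G) + n$. We need $2m(G) + n \le m(B) = m(G) + m(H) + n$, i.e.\ $m(G) \le m(H)$; if instead $m(H) < m(G)$ we take $S = V(H)$ and get $2m(H) + n \le m(G) + m(H) + n$. Either way $a(B) \ge n$. (One should double-check the degenerate cases where $G$ or $H$ is a single vertex or a single edge, but since both are connected with the same order this is harmless.)

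For the total domination side, the idea is that a $\gamma_t$-set $D$ of $G$ already totally dominates all of $V(G)$ inside $B$, and we must additionally dominate $V(H)$. Each $w \in V(H)$ has the neighbor $f^{-1}(w) \in V(G)$, so if $D$ were all of $V(G)$ we would be fine; we want something of size close to $\gamma_t(G) \le a(G)+1$. The cleanest route: let $D_G$ be a $\gamma_t$-set of $G$ and $D_H$ a $\gamma_t$-set of $H$; then $D_G \cup D_H$ totally dominates $B$ (vertices of $G$ are dominated within $G$ by $D_G$, vertices of $H$ within $H$ by $D_H$), giving $\gamma_t(B) \le \gamma_t(G) + \gamma_t(H)$. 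Combined with $a(B) \ge n$ and the trivial bounds $\gamma_t(G) \le n$, $\gamma_t(H) \le n$ this is not quite enough, so I would instead aim for $\gamma_t(B) \le n$ directly: pick any spanning tree of $G$, root it, and let $D$ be the set of non-leaf vertices together with, say, one neighbor of each leaf — more simply, observe that since $G$ is connected, $V(G)$ minus one leaf of a spanning tree is a total dominating set of $G$ of size $n-1$, which together with the matching edges totally dominates $V(H)$ as well only if that leaf's image is still dominated; to be safe take $D = V(G)$, which trivially totally dominates $B$ (every $H$-vertex via its matching edge, every $G$-vertex via a $G$-neighbor since $G$ is connected with $n \ge 2$), so $\gamma_t(B) \le n \le n + 1 \le a(B) + 1$. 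Actually we can do slightly better and cleaner: $D = V(G)$ works and $|D| = n$, and we showed $a(B) \ge n$, so $\gamma_t(B) \le n \le a(B) + 1$, which is more than enough and does not even require the hypothesis $\gamma_t(G) \le a(G)+1$.

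The main obstacle is essentially bookkeeping: confirming the edge count $m(B) = m(G)+m(H)+n$ and the degree shift $d_B(v) = d_G(v)+1$, and then checking that $V(G)$ is genuinely a total dominating set of $B$, which needs $G$ connected and $n(G) \ge 2$ so that every $G$-vertex has a $G$-neighbor (if $n=1$ then $G$ and $H$ are isolated vertices and $B = P_2$, for which the inequality holds directly). There is a small subtlety in the annihilation estimate — one must split on whether $m(G) \le m(H)$ — but this is resolved by the $G \leftrightarrow H$ symmetry of the construction. Once these routine points are nailed down the inequality $\gamma_t(B) \le n \le a(B)+1$ follows immediately.
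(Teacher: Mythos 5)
Your proposal is correct and follows essentially the same route as the paper: take one side of the bijection graph (the one with the smaller edge count, by symmetry) as an annihilation set of size $n$, and the vertex set of one connected factor as a total dominating set of size $n$, giving $\gamma_t(B) \le n \le a(B) < a(B)+1$. Your explicit handling of the $n=1$ degenerate case is a small bonus the paper omits; the rest is the same argument with some extra exploration along the way.
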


\begin{proof}
Let $G$, $H$ and $f$ be as stated, and set $B = B(G,H,f)$. We may without loss of generality assume that $m(G) \geq m(H)$ (otherwise consider $B(G,H,f^{-1})$). Set $S = V(H)$ and note that $\sum(S, B) = 2m(H) + n(G) \leq m(G) + m(H) + n(G) = m(B)$. Thus, $a(B)\geq |S| = n(H)$. Further, since $H$ is connected, $V(H)$ is a total dominating set of  $B$. Thus,  $\gamma_{t}(B) \leq n(H) < n(H) +1 \leq  a(B) +1$.
\end{proof}

\subsection{The Mycielskian}

The famous construction of Mycielski from~\cite{Mycielski}, which is especially important in chromatic graph theory, can be described as follows. The {\em Mycielski graph} $\mu(G)$  of a graph $G$ contains $G$ itself as an isomorphic subgraph, together with $n+1$ additional vertices: to each vertex $v_{i}$ of $G$, a vertex $u_{i}$ is added, and there is another vertex $w$. The vertex $w$ is adjacent to all the vertices $u_{i}$, and each edge $v_{i}v_{j}$ of $G$ yields edges $u_{i}v_{j}$ and $v_{i}u_{j}$.

Next, we prove that if a graph $G$ satisfies Conjecture~\ref{con1}, then so does $\mu(G)$. For other results on different kinds of domination in Mycielski graphs see~\cite{chen-2006, kwon-2021}.

\begin{proposition}
If $G$  is a connected graph with $\gamma_{t}(G)\leq a(G)+1$, then
$
\gamma_{t}(\mu(G))\leq a(\mu(G))+1.
$
 \end{proposition}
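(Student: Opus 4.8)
The plan is to follow the same template used for the double graphs, triangulations, and bijection graphs: produce a large annihilation set of $\mu(G)$ by choosing low-degree vertices, and a small total dominating set of $\mu(G)$, then compare. Let $V(G) = \{v_1,\dots,v_n\}$, let $u_1,\dots,u_n$ be the copied vertices, and let $w$ be the apex vertex. First I would record the degree data in $\mu(G)$: $d_{\mu(G)}(v_i) = 2d_G(v_i)$, $d_{\mu(G)}(u_i) = d_G(v_i) + 1$, and $d_{\mu(G)}(w) = n$. Summing, $m(\mu(G)) = 3m(G) + n$. For the total domination side, note that if $D$ is a $\gamma_t$-set of $G$, then $D$ (viewed inside the copy of $G$ in $\mu(G)$) together with $w$ is a total dominating set of $\mu(G)$: the vertices $v_i$ are dominated by $D$ since $D$ totally dominates $G$; the vertices $u_i$ each have a neighbor $v_j \in N_G(v_i)$, and picking such a $v_j$ in $D$ works once we note $D$ dominates $v_i$, hence some neighbor of $v_i$ lies in $D$ and that neighbor is also adjacent to $u_i$ in $\mu(G)$; and $w$ is dominated by any $u_i$, but we need $N(w)\cap (D\cup\{w\})\neq\emptyset$, so we must be slightly careful — $w$'s neighbors are exactly the $u_i$'s, none of which is in $D\cup\{w\}$. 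So instead I would take the total dominating set to be $D \cup \{u_k, w\}$ for some fixed index $k$ (now $w$ is dominated by $u_k$, and $u_k$ is dominated by $w$), giving $\gamma_t(\mu(G)) \le \gamma_t(G) + 2$.

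For the annihilation side, the natural candidate is $S = \{u_1,\dots,u_n\}$, for which $\sum(S,\mu(G)) = \sum_i (d_G(v_i)+1) = 2m(G) + n$. Since $2m(G)+n \le 3m(G)+n = m(\mu(G))$, this shows $a(\mu(G)) \ge n$. But $n$ alone need not beat $\gamma_t(G)+2$ (for instance $\gamma_t(G)$ can be close to $n$). So I would enlarge $S$: take an optimal annihilation set $S_0$ of $G$ with $|S_0| = a(G)$ and $\sum(S_0, G) \le m(G)$, and put $S = \{u_1,\dots,u_n\} \cup \{v_i : v_i \in S_0\}$. Then $\sum(S,\mu(G)) = (2m(G)+n) + 2\sum(S_0,G) \le 2m(G) + n + 2m(G) = 4m(G) + n$. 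This is too big in general — it exceeds $m(\mu(G)) = 3m(G)+n$ by $m(G)$. Hence the clean combination has to be tempered: I would instead bound $\sum(S_0, G) \le m(G)$ more crudely as needed, or, better, observe that we really only need a set of size $\gamma_t(G)+2$ and that $n \ge \gamma_t(G)$ already — indeed for any graph without isolated vertices $\gamma_t(G) \ge 2$, and more usefully $n(G) \ge \gamma_t(G)$ always. So taking $S = \{u_1,\dots,u_n\}$ and then adjoining just two more low-degree vertices (say $v_p, v_q$ where $v_p, v_q$ have the two smallest degrees in $G$) gives $\sum(S \cup \{v_p,v_q\}, \mu(G)) = 2m(G) + n + 2d_G(v_p) + 2d_G(v_q)$; since $d_G(v_p) + d_G(v_q)$ is small relative to $m(G)$ when $G$ has enough edges, this stays at most $3m(G)+n$, yielding $a(\mu(G)) \ge n + 2 \ge \gamma_t(G) + 2 \ge \gamma_t(\mu(G))$, which is even stronger than needed.

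The main obstacle is the borderline case where $G$ has very few edges relative to its order — essentially when $G$ is a path or a near-path, so that $2d_G(v_p) + 2d_G(v_q)$ (which is at least $2+2 = 4$ once $G$ has no leaves, or $1+1=2$ with leaves, times two) is not negligible against $m(G)$. Concretely, one must check that $2(d_G(v_p) + d_G(v_q)) \le m(G)$; this fails only for graphs with $m(G) \le 3$ or so, i.e.\ $P_2, P_3, P_4, C_3$ and a couple of others, which I would dispatch by hand (or by invoking that these have $\delta \ge 3$ after Mycielskian? — no, rather by direct computation of $\gamma_t$ and $a$ for the finitely many small $\mu(G)$). Alternatively, and more robustly, I would not insist on $S \supseteq \{u_1,\dots,u_n\}$ but build $S$ purely from an optimal annihilation set logic applied directly to $\mu(G)$: since all $u_i$ and $w$ together with the low-degree $v_i$'s are among the cheapest vertices, an optimal annihilation set of $\mu(G)$ automatically contains all of them, and a short counting argument then gives $a(\mu(G)) \ge n+1$, with the "+1'' from $w$ plus a careful accounting, which combined with $\gamma_t(\mu(G)) \le \gamma_t(G)+2 \le a(G)+3$ and the trivial $a(G) \le n-1$ still needs one more unit — so the honest version of the argument must squeeze out $a(\mu(G)) \ge a(G) + n/2$ or similar via the $\sum(S_0,G)\le m(G)$ bound used at half strength. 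I expect the write-up to choose $S = \{u_1,\dots,u_n, w\}$, compute $\sum(S,\mu(G)) = 2m(G) + 2n \le 3m(G)+n$ provided $n \le m(G)$ (true for connected $G$ with $n\ge 2$ that is not a tree, and for trees $n = m(G)+1$ forces a one-vertex correction), conclude $a(\mu(G)) \ge n+1$, and finish with $\gamma_t(\mu(G)) \le \gamma_t(G) + 2 \le n + 1 \le a(\mu(G)) + 1$ after handling the tree case and the tiny exceptions separately.
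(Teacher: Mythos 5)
Your proposal never closes: it cycles through several candidate annihilation sets and leaves every branch with an unresolved case. The most concrete error is the claim that the condition $2(d_G(v_p)+d_G(v_q))\le m(G)$ ``fails only for graphs with $m(G)\le 3$ or so''; in fact it fails for every $d$-regular graph on fewer than $8$ vertices (e.g.\ $C_4,\dots,C_7$ and $K_3,\dots,K_7$), among others, so the exceptional family is not the short list you propose to dispatch by hand. Your fallback $S=\{u_1,\dots,u_n,w\}$ needs $n(G)\le m(G)$, which fails precisely for trees; the ``short counting argument'' for $a(\mu(G))\ge n+1$ is never supplied; and the closing chain $\gamma_t(G)+2\le n+1$ needs $\gamma_t(G)\le n-1$, which you only support by the one-unit-too-weak $\gamma_t(G)\le n$. (It does hold for connected $G$ with $n\ge 3$ via $\gamma_t(G)\le 2n/3$, and with that plus $a(\mu(G))\ge n$ from $S=\{u_1,\dots,u_n\}$ your approach could be completed, with $P_2$ checked by hand --- but you did not assemble this.) A second, self-inflicted handicap is the bound $\gamma_t(\mu(G))\le\gamma_t(G)+2$: the sharp identity $\gamma_t(\mu(G))=\gamma_t(G)+1$ is known (Chen--Xing, cited in the paper), and can also be seen directly by taking $D'=(D\setminus\{v\})\cup\{u_v,w\}$ for any $v\in D$, since $u_v$ covers every vertex that $v$ covered.

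The paper's proof is two lines and uses the hypothesis $\gamma_t(G)\le a(G)+1$ directly, which you abandoned. Since $d_{\mu(G)}(v)=2d_G(v)$ for $v\in V(G)$ while $m(\mu(G))=3m(G)+n(G)$, an optimal annihilation set $S$ of $G$, viewed inside the $G$-copy of $\mu(G)$, satisfies $\sum(S\cup\{w\},\mu(G))\le 2m(G)+n(G)<m(\mu(G))$, whence $a(\mu(G))\ge a(G)+1$; combined with $\gamma_t(\mu(G))=\gamma_t(G)+1\le a(G)+2\le a(\mu(G))+1$ this finishes with no exceptional cases. Your instinct to seat the annihilation set on the $u_i$'s is workable in principle, but, as your own computations show, it forces a fight with trees and small regular graphs; seating it on the doubled copy of an optimal annihilation set of $G$ avoids all of that.
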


 \begin{proof}
Let $S$ be an optimal annihilation set in $G$. Then $\sum(S,\,G)\leq m(G)$, and therefore $\sum(S,\,\mu(G))=2\sum(S,\,G)\leq 2m(G)$. Then $\sum(S\cup\{w\},\,\mu(G)) = \sum(S,\,\mu(G)) + d_{\mu(G)}(w) \leq 2m(G) + n(G) < 3m(G)+n(G) = m(\mu(G))$. Hence $a(\mu(G))\geq a(G)+1$. On the other hand, from~\cite{chen-2006} we know that $\gamma_{t}(\mu(G)) = \gamma_{t}(G) + 1$. Using these facts together with the assumption $\gamma_{t}(G)\leq a(G)+1$ we get
\begin{align*}
\gamma_{t}(\mu(G)) & = \gamma_{t}(G) + 1 \le a(G) + 2 \le (a(\mu(G)) - 1) + 2 = a(\mu(G)) + 1
\end{align*}
and we are done.
\end{proof}
\subsection{Universally-identifying graphs}

A \emph{universal vertex} of a graph $G$ is a vertex adjacent to all other vertices of $V (G)$. For a connected graph $G$ with $v\in V(G)$ and another graph $H$ containing a universal vertex, we define a new graph, named \textit{universally-identifying graph}, denoted by $G_v\ast H$, which is obtained by identifying the vertex $v$ of $G$ with the universal vertex of $H$.

In this subsection, we prove that Conjecture \ref{con1} holds for universally-identifying graphs.

\begin{proposition}\label{uni}
Let $G$ and $H$ be connected graphs. If $H$ is a graph with $n(H)\geq \lceil\frac{n(G)}{3}+2\rceil$ and a universal vertex, and $v\in V(G)$, then $\gamma_t(G_v\ast H)\leq a(G_v\ast H)+1$.
\end{proposition}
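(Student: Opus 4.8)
The plan is to bound $\gamma_t(F)$ from above by $\gamma_t(G)+1$ using an explicit total dominating set, and to bound $a(F)$ from below using only the elementary inequality $a(F)\ge\lfloor n(F)/2\rfloor$ together with the classical estimate $\gamma_t(G)\le\frac{2n(G)}{3}$ valid for every connected graph of order at least $3$ (see~\cite{Henning}). Throughout write $F:=G_v\ast H$ and let $v$ also denote the vertex of $F$ obtained by identifying $v\in V(G)$ with the universal vertex of $H$; then $n(F)=n(G)+n(H)-1$, and in $F$ the vertex $v$ is adjacent to every vertex of $V(H)\setminus\{v\}$.

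The first step is to prove $\gamma_t(F)\le\gamma_t(G)+1$. Let $D$ be a $\gamma_t$-set of $G$. Every edge of $G$ is an edge of $F$, so each vertex of $V(G)$, and in particular $v$, still has a neighbor in $D$ inside $F$; moreover every vertex of $V(H)\setminus\{v\}$ is adjacent to $v$. Hence $D\cup\{v\}$ is a total dominating set of $F$, which yields the bound (and even $\gamma_t(F)\le\gamma_t(G)$ when $v\in D$).

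The second step compares $\gamma_t(G)$ with $a(F)$. If $n(G)=2$, then $G\cong P_2$, so $\gamma_t(G)=2$; since $H$ is connected with $n(H)\ge\lceil 2/3+2\rceil=3$, we get $n(F)\ge 4$ and hence $a(F)\ge\lfloor n(F)/2\rfloor\ge 2=\gamma_t(G)$. If $n(G)\ge 3$, then $\gamma_t(G)\le\lfloor 2n(G)/3\rfloor$ by the classical bound, while the hypothesis gives $n(H)\ge\lceil n(G)/3+2\rceil\ge n(G)/3+2$, so $n(F)=n(G)+n(H)-1\ge\frac{4n(G)}{3}+1$ and therefore $\frac{n(F)}{2}\ge\frac{2n(G)}{3}$, whence $a(F)\ge\lfloor n(F)/2\rfloor\ge\lfloor 2n(G)/3\rfloor\ge\gamma_t(G)$. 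In all cases $\gamma_t(G)\le a(F)$, and combining with the first step we obtain $\gamma_t(F)\le\gamma_t(G)+1\le a(F)+1$.

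The only point requiring genuine care is the arithmetic in the second step: one must invoke the sharp $\frac{2n}{3}$-bound on $\gamma_t(G)$ rather than a cruder estimate so that it still fits below $\lfloor n(F)/2\rfloor$, and one must keep track of the floors and ceilings; the hypothesis $n(H)\ge\lceil n(G)/3+2\rceil$ ensures that $n(F)$ is large enough for this to go through. The degenerate case $n(G)=2$ has to be isolated only because the $\frac{2n}{3}$-bound fails there, but the conclusion persists since $n(H)\ge 3$ forces $n(F)\ge 4$. Beyond this bookkeeping I do not anticipate any real obstacle.
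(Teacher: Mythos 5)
Your proof is correct and follows essentially the same route as the paper: the total dominating set $D\cup\{v\}$ gives $\gamma_t(G_v\ast H)\le\gamma_t(G)+1$, and the bound $\gamma_t(G)\le\frac{2n(G)}{3}$ combined with $a(G_v\ast H)\ge\lfloor n(G_v\ast H)/2\rfloor$ closes the argument. Your explicit treatment of the case $n(G)=2$, where the $\frac{2n}{3}$-bound does not apply, is a small but welcome refinement over the paper's version.
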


\begin{proof}
If $D$ is a $\gamma_t$-set of $G$, then $D\cup\{v\}$ is a total dominating set of $G_v\ast H$. Hence $\gamma_t(G_v\ast H)\leq \gamma_t(G)+1$. Since $\gamma_t(G)\leq \frac{2n(G)}{3}$ holds for any connected graph $G$, see~\cite{henning-2007}, we have
  $$\gamma_t(G_v\ast H)\leq \gamma_t(G)+1\leq \frac{2n(G)}{3}+1\leq \left\lfloor\frac{n(H)+n(G)-1}{2}\right\rfloor+1\leq a(G_v\ast H)+1$$
and we are done.
\end{proof}

If $G$ in Proposition \ref{uni} has minimum degree $1$ or $2$, then $G_v\ast H$ may also have minimum degree $1$ or $2$, so long as $G-v$ has a vertex in $V(G)\setminus \{v\}$ of minimum degree $1$ or $2$.

In our final result we provide another class of graphs, each of its members has minimum degree $1$ or $2$ and satisfies Conjecture~\ref{con1}.

\begin{proposition}
Let $G$ be a connected graph with a cut-vertex $v\in V(G)$ such that $d_{G}(v)=k\geq \left\lfloor\frac{n(G)}{4}+4\right\rfloor$. If $G-v$ has a component of order $n(G)-k$ and this component contains exactly one neighbor of $v$, then $\gamma_t(G)\leq a(G)+1$.
\end{proposition}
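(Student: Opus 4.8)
The plan is to split into two regimes according to the size of $\Delta=\Delta(G)$ relative to $n=n(G)$, and in each regime to play a small total dominating set against the trivial bound $a(G)\ge\lfloor n/2\rfloor$; as will emerge, the hypothesis $\Delta\ge\lfloor n/4+4\rfloor$ is exactly what the resulting arithmetic requires. Throughout let $R$ denote the component of $G-v$ of order $n-\Delta$ and $B=V(G)\setminus(R\cup\{v\})$, so that $|B|=\Delta-1$ and $G[B\cup\{v\}]$ is a connected graph on $\Delta$ vertices. The first regime, $\Delta\ge\lceil n/2\rceil$, is immediate: by the classical bound $\gamma_t(H)\le n(H)-\Delta(H)+1$ for connected graphs $H$ (see e.g.~\cite{Henning}) we obtain $\gamma_t(G)\le n-\Delta+1\le n-\lceil n/2\rceil+1=\lfloor n/2\rfloor+1\le a(G)+1$. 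So assume from now on that $\lfloor n/4+4\rfloor\le\Delta\le\lceil n/2\rceil-1$; in particular $|R|=n-\Delta>\lfloor n/2\rfloor$ and $|B|\ge 3$.

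In this second regime I would bound $\gamma_t(G)$ using the cut-vertex $v$: the union of a total dominating set of $G[R\cup\{v\}]$ with one of $G[B\cup\{v\}]$ is a total dominating set of $G$, hence $\gamma_t(G)\le\gamma_t(G[R\cup\{v\}])+\gamma_t(G[B\cup\{v\}])$. For the first summand, $\gamma_t(H)\le\tfrac23 n(H)$ for connected $H$ of order at least $3$ (\cite{henning-2007}) gives $\gamma_t(G[R\cup\{v\}])\le\tfrac23(n-\Delta+1)$. The real content lies in bounding $\gamma_t(G[B\cup\{v\}])$ by a constant, say by $3$. Here one uses that $\Delta$ is the maximum degree of all of $G$: a vertex of $B$ has $G$-degree at most $|B|=\Delta-1$, so a vertex realising $\Delta$ is either $v$ or lies in $R$. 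If $d_G(v)=\Delta$, then $d_G(v)=e(v,R)+e(v,B)$ shows that $v$ fails to be adjacent to exactly $e(v,R)-1$ vertices of $B$; when $e(v,R)$ is small this makes $v$ (almost) universal in $G[B\cup\{v\}]$ and yields $\gamma_t(G[B\cup\{v\}])\le 3$ directly, while when $e(v,R)$ is larger the additional edges from $v$ into $R$ reduce $\gamma_t(G[R\cup\{v\}])$ by the same number of vertices, so that the sum of the two summands is not increased. If instead the maximum degree is realised at some $w\in R$, then $w$ already has degree $\Delta$ inside $G[R\cup\{v\}]$, so the first summand may be replaced by $n(G[R\cup\{v\}])-\Delta+1=n-2\Delta+2$; in this case one must additionally argue that $a(G)$ exceeds $\lfloor n/2\rfloor$ by the needed amount, which follows because a large value of $\gamma_t(G[B\cup\{v\}])$ forces $B$ to be sparse, hence forces enough vertices of small degree (ultimately enough leaves, after the forced membership of support vertices) to raise $a(G)$.

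Assembling these bounds yields $\gamma_t(G)\le\tfrac23(n-\Delta+1)+3$, and combining with $a(G)\ge\lfloor n/2\rfloor$ the inequality $\gamma_t(G)\le a(G)+1$ follows once $\tfrac23(n-\Delta+1)+3\le\lfloor n/2\rfloor+1$; this rearranges to essentially $4\Delta\ge n+16$, i.e.\ $\Delta\ge n/4+4$, which is precisely the hypothesis. The floor functions are handled by a routine residue check modulo $12$, invoking the integer form $\gamma_t(H)\le\lfloor\tfrac23 n(H)\rfloor$ and, in the critical residues, the fact that equality there forces $G[R\cup\{v\}]$ to be spider-like and hence to contribute extra leaves to $a(G)$; the finitely many small orders for which the assumed range of $\Delta$ is empty are verified directly. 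I expect the bound $\gamma_t(G[B\cup\{v\}])\le 3$, together with its interplay with $a(G)$ when $\Delta$ is attained inside $R$, to be the main obstacle: it is certainly false for an arbitrary connected graph on $\Delta$ vertices, so the proof must genuinely exploit that $\Delta$ is the global maximum degree and that $v$ separates $B$ from the large side $R$. A secondary difficulty is that the final numerical estimate is essentially tight, leaving no slack and so demanding careful bookkeeping of the floor terms.
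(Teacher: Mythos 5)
There is a genuine gap, and it sits exactly where you predicted: the claim $\gamma_t(G[B\cup\{v\}])\le 3$. Your own analysis only covers the case where $v$ attains the maximum degree and has few neighbours in $R$; in the remaining cases you substitute hopes for arguments. Worse, the claim is simply false in the generality you are working in. Take $B$ to consist of $k$ components of $G-v$, each a path $b_i b_i' b_i''$ with only the middle vertex $b_i'$ adjacent to $v$, and let the maximum degree $\Delta=3k+1$ be attained at a vertex of $R$ (possible throughout your second regime, where $n-\Delta>\Delta$). Every $b_i'$ must lie in any total dominating set of $G[B\cup\{v\}]$ to dominate the leaves $b_i,b_i''$, so $\gamma_t(G[B\cup\{v\}])\ge k$, which is unbounded. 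Your fallback for this situation --- that a sparse $B$ forces enough low-degree vertices to push $a(G)$ above $\lfloor n/2\rfloor$ by the required amount --- is not an argument but a research programme; making it precise (how many leaves, how they interact with an optimal annihilation set, how the deficit $\gamma_t(G[B\cup\{v\}])-3$ is matched term by term) is the entire difficulty, and nothing in the proposal addresses it. A secondary problem is that even granting the bound $3$, the inequality $\tfrac23(n-\Delta+1)+3\le\lfloor n/2\rfloor+1$ needs $\Delta\ge n/4+4$ exactly, while the hypothesis only gives $\Delta\ge\lfloor n/4\rfloor+4$, which in some residues is $n/4+3.25$; the deferred ``routine residue check'' with its appeal to extremal spiders is another unproven piece of real content.

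For comparison, the paper's proof avoids all of this by reading the hypothesis so that the $\Delta-1$ vertices outside the large component $G_1$ are precisely the neighbours of $v$ other than its one neighbour in $G_1$; then $v$ is a universal vertex of $G_0=G[B\cup\{v\}]$, the graph is a universally-identifying graph $G'_v\ast G_0$ with $G'=G[V(G_1)\cup\{v\}]$, and one adds only the single vertex $v$ to a $\gamma_t$-set of $G'$. This gives $\gamma_t(G)\le\tfrac23(n-\Delta+1)+1\le\lfloor n/2\rfloor+1\le a(G)+1$, with comfortable slack in the arithmetic. In other words, the universality of $v$ on the small side is what replaces your constant $3$ by $1$ and what kills the bad configurations above; without it the statement you are actually trying to prove is strictly harder than the one the paper's argument handles, and your proposal does not close that distance. (Your first regime, $\Delta\ge\lceil n/2\rceil$, via $\gamma_t(G)\le n-\Delta+1$, is fine but is not needed once the $+1$ bound is available.)
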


\begin{proof}
Set $n = n(G)$, and let $N_G(v)=\{v_1,\ldots,v_k\}$. By our assumption, there is a vertex, say $v_1\in N_G(v)$, such that the component $G_1$ of $G-v$ containing $v_1$ has $n(G_1)=n-k$. Let $V_0=\{v_2,v_3,\ldots,v_k\}$, $V'=V(G_1)\cup \{v\}$, $G'_{v}=G[V']$, and $G_0=G[V_{0}\cup\{v\}]$. Then $v$ is a universal vertex of $G_0$, and $G=G'_v\ast G_0$ can be seen as an universally-identifying graph. Since $k\geq \left\lfloor\frac{n}{4}+4\right\rfloor$, we have $\frac{n}{2}\geq \frac{2(n-k+1)}{3}+1$. By a similar reasoning as that in the proof of Proposition \ref{uni}, we have $$\gamma_t(G)\leq\gamma_t(G')+1\leq \frac{2(n-k+1)}{3}+1\leq \lfloor\frac{n}{2}\rfloor+1\leq a(G)+1\,.$$
\end{proof}

\vspace*{1cm}
\section*{Acknowledgements}

Hongbo Hua  is supported by National Natural Science Foundation
of China under Grant No. 11971011. Sandi Klav\v{z}ar acknowledges the financial support from the Slovenian Research Agency (research core funding P1-0297 and projects N1-0095, J1-1693, J1-2452).

\vspace*{1cm}
\noindent{\bf Declaration of competing interest}

The authors declare that they have no known competing financial interests or personal relationships that could have appeared to influence the work reported in this paper.

\end{document}